\title{Restricting Representations from a complex reductive group to a real form}
\begin{document}
\maketitle

\begin{abstract}
    Let $G$ be a complex connected reductive algebraic group and let $G_{\RR}$ be a real form of $G$. We construct a sequence of functors $L_i\mathcal{R}$ from admissible (resp. finite-length) representations of $G$ to admissible (resp. finite-length) representations of $G_{\RR}$. We establish many basic properties of these functors, including their behavior with respect to infinitesimal character, associated variety, and restriction to a maximal compact subgroup. We deduce that each $L_i\mathcal{R}$ takes unipotent representations of $G$ to unipotent representations of $G_{\RR}$. Taking the alternating sum of $L_i\mathcal{R}$, we get a well-defined homomorphism on the level of characters. We compute this homomorphism in the case when $G_{\RR}$ is split. 
\end{abstract}

\section{Introduction}

Let $G$ be a complex connected reductive algebraic group and let $G_{\RR}$ be a real form of $G$. Write $M_{fl}(G)$ (resp. $M_{fl}(G_{\RR})$) for the category of finite-length admissible representations of $G$ (resp. $G_{\RR}$). There are many difficult questions about admissible representations of real reductive groups which have (relatively) easy answers in the case of complex groups. One example is the classification of unipotent representations. Thus, it is natural to look for a relationship between the categories $M_{fl}(G)$ and $M_{fl}(G_{\RR})$. One such relationship is \emph{base change} (see \cite[Chapter 1.7]{ArthurClozel}). This is a map
$$\mathrm{BC}: KM_{fl}(G_{\RR}) \to KM_{fl}(G)$$
from the Grothendieck group of $M_{fl}(G_{\RR})$ to the Grothendieck group of $M_{fl}(G)$. In Section \ref{sec:functors}, we will define a homomorphism in the \emph{opposite} direction. In fact, we will define a sequence of functors
\begin{equation}\label{eq:LiRintro}L_i\mathcal{R}: M_{fl}(G) \to M_{fl}(G_{\RR}), \qquad i \geq 0.\end{equation}
These functors have nice behavior with respect to several important invariants, including infinitesimal character and associated variety (see Lemma \ref{lem:propsofR} and Proposition \ref{prop:propsofR}).

Taking the alternating sum of $L_iR$, we get a group homomorphism
\begin{equation}\label{eq:Rintro}\mathcal{R}: KM_{fl}(G) \to KM_{fl}(G_{\RR}), \qquad \mathcal{R}[X] = \sum_i (-1)^i [L_i\mathcal{R}(X)]\end{equation}
This homomorphism has even nicer behavior than the individual functors. In particular, the restriction of $\mathcal{R}[X]$ to a maximal compact subgroup is given by a relatively simple formula (see Corollary \ref{cor:Ktypes}).

In Section \ref{sec:split}, we will compute the homomorphism (\ref{eq:Rintro}), in the case when $G_{\RR}$ is split, in terms of the basis of standard representations.

In Section \ref{sec:unipotent}, we will show that the functors $L_i\mathcal{R}$ take unipotent representations of $G$ to unipotent representations of $G_{\RR}$. The classification and construction of unipotent representations of $G$ is relatively well-understood, see \cite{LMBM}, \cite{MBMat}. So the functors $L_i\mathcal{R}$ should provide new and useful constructions of unipotent representations of real reductive groups.



\subsection{Acknowledgments}

I am grateful to David Vogan for many helpful discussions.

\section{Restriction of coherent sheaves}\label{sec:coherent}

Choose a Cartan involution $\theta$ of $G$ compatible with $G_{\RR}$ and let $K = G^{\theta}$ (so that $K \cap G_{\RR}$ is a maximal compact subgroup of $G_{\RR}$). Write $\fg$, $\fk$ for the Lie algebras of $G$, $K$ and let $\fp= \fg^{-d\theta}$. 

For any subgroup $H$ of $G$ consider the following subgroups of $G \times G$
\begin{align*}
    H_L &= \{(h,1) \mid h \in H\}\\
    H_R &= \{(1,h) \mid h \in H\}\\
    H_{\Delta} &= \{(h,h) \mid h \in H\}
\end{align*}
Denote the Lie algebras by $\fh_L$, $\fh_R$, $\fh_{\Delta}$, and so on. There are several embeddings which will show up below.

\begin{itemize}
    \item The embedding $\fp \subset \fg$ induces a $K$-equivariant isomorphism
$$(\fg/\fk)^* \xrightarrow{\sim} \fp^*$$
    \item The projection $\fg \to \fp$ induces a $K$-equivariant embedding
    $$\fp^* \hookrightarrow \fg^*$$
    \item The embedding $\fg \hookrightarrow \fg \times \fg$ defined by $x \mapsto \frac{1}{2}(x,-x)$ induces an isomorphism
$$(\fg \times \fg/\fg_{\Delta})^* \xrightarrow{\sim} \fg^*$$
This isomorphism intertwines the $G_{\Delta}$-action on the source with the $G$-action on the target.
\item The embedding $\fg \hookrightarrow \fg \times \fg$ defined by $x \mapsto (x,0)$ induces an isomorphism
$$(\fg \times \fg/(\fg_{\Delta}+\fk_R))^* \xrightarrow{\sim} (\fg/\fk)^*$$
and hence an embedding
$$\iota: (\fg/\fk)^* \hookrightarrow (\fg \times \fg/\fg_{\Delta})^*$$
This embedding intertwines the $K$-action on the source with the $K_{\Delta}$-action on the target. 
\end{itemize}
It is easy to see that the following diagram commutes
\begin{center}
    \begin{tikzcd}
    (\fg/\fk)^* \ar[d,"\sim"] \ar[r,hookrightarrow,"\iota"] & (\fg \times \fg/\fg_{\Delta})^* \ar[d,"\sim"] \\
    \fp^* \ar[r,hookrightarrow] & \fg^*
    \end{tikzcd}
\end{center}
Restriction along $\iota$ defines a right exact functor
$$R: \mathrm{QCoh}^{G_{\Delta}}(\fg \times \fg/\fg_{\Delta})^* \to \mathrm{QCoh}^K(\fg/\fk)^*$$
Write 
$$L_iR: \mathrm{QCoh}^{G_{\Delta}}(\fg \times \fg/\fg_{\Delta})^* \to \mathrm{QCoh}^K(\fg/\fk)^*$$
for the left derived functors. Taking global sections, the category $\mathrm{QCoh}^{G_{\Delta}}(\fg \times \fg/\fg_{\Delta})^*$ (resp. $\mathrm{QCoh}^K(\fg/\fk)^*$) is identified with the category of $G_{\Delta}$-equivariant $S(\fg \times \fg/\fg_{\Delta})$-modules (resp. $K$-equivariant $S(\fg/\fk)$-modules), and $L_iR(M)$ is identified with $\mathrm{Tor}_i^{S(\fg \times \fg/\fg_{\Delta})}(S(\fg/\fk),M)$. These modules can be computed using the Koszul complex $\wedge(\fk_R) \otimes M$
$$0 \to \wedge^d(\fk_R) \otimes M \to \wedge^{d-1}(\fk_R) \otimes M \to ... \to \wedge^1(\fk_R) \otimes M \to M \to 0$$
Here $d= \dim(\fk)$ and the differentials are given by
\begin{align*}
\partial(X_1 \wedge ... \wedge X_n \otimes v) = &\sum_{i=1}^n (-1)^i (X_1 \wedge... \wedge \widehat{X}_i \wedge ... \wedge X_n \otimes X_iv)\\
+ &\sum_{j < k} (-1)^{j+k}([X_j, X_k] \wedge X_1 \wedge ... \wedge \widehat{X}_j \wedge ... \wedge \widehat{X}_k \wedge ... \wedge X_n \otimes v)\end{align*}
The terms in the complex $\wedge(\fk_R) \otimes M$ are $K$-equivariant $S(\fg/\fk)$-modules via $(S(\fg/\fk),K) \simeq (S(\fg \times \fg/\fg_{\Delta}+\fk_R), K_{\Delta})$ and the differentials are $K$-equivariant $S(\fg/\fk)$-module homomorphisms.

\begin{lemma}\label{lem:propsofRcommutative}
Let $M \in \mathrm{Coh}^{G_{\Delta}}(\fg \times \fg/\fg_{\Delta})^*$. The following are true for each $i \geq 0$
\begin{itemize}
    \item[(i)] There is an isomorphism in $\mathrm{QCoh}^K(\fg/\fk)^*$
    $$L_iR(M) \simeq H_i(\wedge(\fk_R) \otimes M)$$
    \item[(ii)] If $i > d=\dim(\fk)$, then $L_iR(M)=0$.
    \item[(iii)] If $M \in \mathrm{Coh}^{G_{\Delta}}(\fg \times \fg/\fg_{\Delta})^*$, then $L_iR(M) \in \mathrm{Coh}^K(\fg/\fk)^*$.
    \item[(iv)] There is an inclusion of sets
    $$\mathrm{Supp}(L_iR(M)) \subseteq \iota^{-1}\mathrm{Supp}(M)$$
\end{itemize}
\end{lemma}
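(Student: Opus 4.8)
The plan is to establish all four statements from the single identification $L_iR(M) \simeq H_i(\wedge(\fk_R) \otimes M)$ in part (i), so the real work is concentrated there, after which (ii)--(iv) follow by inspection of the Koszul complex. For (i), I would observe that $\wedge(\fk_R) \otimes S(\fg \times \fg/\fg_\Delta)$, with the differential written above (the part involving $X_i v$ being interpreted via the module structure $S(\fg\times\fg/\fg_\Delta) \to S(\fg\times\fg/\fg_\Delta)$, i.e. multiplication by the image of $X_i \in \fk_R$), is precisely the Koszul resolution of $S(\fg/\fk) = S(\fg\times\fg/(\fg_\Delta+\fk_R))$ as an $S(\fg\times\fg/\fg_\Delta)$-module: the elements of $\fk_R$ map to a regular sequence in $S(\fg\times\fg/\fg_\Delta)$ because $\fg\times\fg/\fg_\Delta \to \fg\times\fg/(\fg_\Delta+\fk_R)$ is a surjection of vector spaces (equivalently $\fk_R \cap \fg_\Delta = 0$ inside $\fg\times\fg$, which is clear since $\fk_R = \{(0,X): X\in\fk\}$), so a linear complement to $\fk_R$ furnishes a polynomial subring over which $S(\fg\times\fg/\fg_\Delta)$ is free. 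Hence this is a finite free (in particular flat) resolution, and tensoring it over $S(\fg\times\fg/\fg_\Delta)$ with $M$ computes $\mathrm{Tor}^{S(\fg\times\fg/\fg_\Delta)}_i(S(\fg/\fk), M) = L_iR(M)$; the result is exactly the complex $\wedge(\fk_R)\otimes M$ displayed in the text, whose homology is therefore $L_iR(M)$. The $K$-equivariance and $S(\fg/\fk)$-module structure on the homology are inherited from the equivariance of the resolution, already noted in the text via $(S(\fg/\fk),K) \simeq (S(\fg\times\fg/(\fg_\Delta+\fk_R)), K_\Delta)$.

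Given (i), part (ii) is immediate: the complex $\wedge(\fk_R)\otimes M$ is concentrated in homological degrees $0,\dots,d$ with $d = \dim\fk = \dim\fk_R$, so $H_i = 0$ for $i > d$. Part (iii) follows because $M$ coherent means $M$ is a finitely generated $S(\fg\times\fg/\fg_\Delta)$-module, hence each $\wedge^i(\fk_R)\otimes M$ is finitely generated over $S(\fg\times\fg/\fg_\Delta)$ and a fortiori (via the surjection, restricting scalars does not preserve finite generation, so one argues instead that) each homology group is a subquotient of a finitely generated module over the Noetherian ring $S(\fg\times\fg/\fg_\Delta)$ whose action factors through $S(\fg/\fk)$; thus $H_i$ is finitely generated over $S(\fg/\fk)$, i.e. coherent on $(\fg/\fk)^*$. (The $K$-equivariant structure is already accounted for by (i).)

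For part (iv), I would argue locally on $(\fg/\fk)^*$. A point $\xi \in (\fg/\fk)^*$ lies outside $\mathrm{Supp}(L_iR(M))$ as soon as the stalk $H_i(\wedge(\fk_R)\otimes M)_\xi$ vanishes. Since localization is exact, $H_i(\wedge(\fk_R)\otimes M)_\xi = H_i\big((\wedge(\fk_R)\otimes M) \otimes_{S(\fg/\fk)} \mathcal{O}_{(\fg/\fk)^*,\xi}\big)$, and because the complex's terms are obtained from $M$ by tensoring over $S(\fg\times\fg/\fg_\Delta)$, this localized complex is $\wedge(\fk_R)\otimes M_{\iota(\xi)}$ where $M_{\iota(\xi)}$ is the localization of $M$ at the point $\iota(\xi) \in (\fg\times\fg/\fg_\Delta)^*$ (using that the fiber of $\iota$ over $\xi$ consists of the single point $\iota(\xi)$, by commutativity of the displayed square $\iota$ is a closed embedding). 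If $\iota(\xi) \notin \mathrm{Supp}(M)$ then $M_{\iota(\xi)} = 0$, hence the whole localized complex vanishes and so does every $H_i$; equivalently $\xi \notin \iota^{-1}\mathrm{Supp}(M)$ forces $\xi \notin \mathrm{Supp}(L_iR(M))$, which is the asserted containment $\mathrm{Supp}(L_iR(M)) \subseteq \iota^{-1}\mathrm{Supp}(M)$.

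The main obstacle is purely in part (i): one must be careful that the differential written in the excerpt --- which mixes a ``module'' term $X_i v$ and a ``bracket'' term $[X_j,X_k]$ --- is genuinely the Koszul (Chevalley--Eilenberg-type) differential for the regular sequence given by the images of a basis of $\fk_R$, rather than something twisted; concretely one checks $\fk_R$ acts on $M$ through the $S(\fg\times\fg/\fg_\Delta)$-module structure (the images of $\fk_R$-elements in $S(\fg\times\fg/\fg_\Delta)$), that these images form a regular sequence, and that the bracket term is exactly the correction making $\partial^2 = 0$ in the equivariant setting --- i.e. this is the standard Koszul complex of a Lie algebra acting by a commuting family of ``multiplications'', and its acyclicity in positive degrees is the regular-sequence statement. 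Everything else is formal homological algebra and localization.
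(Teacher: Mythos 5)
Your proposal is substantively correct and takes essentially the same route as the paper: identify $L_iR$ with Tor, compute Tor via the Koszul resolution of $S(\fg/\fk)$ over $S(\fg\times\fg/\fg_\Delta)$ (possible since $\fk_R \cap \fg_\Delta = 0$, so $\fk_R$ maps to a regular sequence), and then read off (ii)--(iv) from the explicit complex. The paper compresses this into a one-line appeal to Matsumura and a citation to Chriss--Ginzburg for (iv); your (iii) and (iv) arguments are slightly more explicit than what the paper writes, and in fact your (iii) argument (that $H_i$ is a subquotient of the finitely generated $S(\fg\times\fg/\fg_\Delta)$-module $\wedge^i(\fk_R)\otimes M$, over a Noetherian ring, with the action factoring through $S(\fg/\fk)$ because $\fk_R$ is nullhomotopic) is cleaner than the paper's terse appeal to ``$\iota$ closed, hence proper,'' since $R$ is a pullback rather than a pushforward.

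One remark on the ``main obstacle'' you flag at the end. You are right to be suspicious of the displayed differential, but your resolution is off: you assert that the bracket term $[X_j,X_k]$ is ``exactly the correction making $\partial^2=0$.'' That is true in the $U(\fg)$-module setting of Section~3 (where $X_1X_2 - X_2X_1 = [X_1,X_2]$ as operators on $X$), but it is false in the commutative setting of Section~2. There the action of $\fk_R$ on $M$ factors through $S^1(\fg\times\fg/\fg_\Delta)$ and hence commutes, so $\partial_1^2 = 0$ already and the bracket term, if kept nonzero, would produce $\partial^2(X_1\wedge X_2\otimes v) = [X_1,X_2]\,v \ne 0$ in general. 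The formula in the text appears to be copied from the Lie-algebra-homology version; in the commutative Koszul complex of Matsumura Chapter~16 the bracket term is simply absent. This does not affect your argument --- the correct Koszul resolution without the bracket term is what computes Tor, and everything downstream in (ii)--(iv) depends only on the complex being concentrated in degrees $0,\dots,d$ with terms $\wedge^i(\fk_R)\otimes M$ --- but the diagnosis in your last paragraph should be reversed: the bracket term is not a needed correction but rather an artifact that must be dropped (or read as zero) in the commutative setting.
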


\begin{proof}
By definition, the image of $\iota$ is the subspace $(\fg \times \fg/(\fg_{\Delta}+\fk_R))^* \subset (\fg \times \fg/\fg_{\Delta})^*$, which is defined by the ideal in $S(\fg \times \fg/\fg_{\Delta})$ generated by $\fk_R$. So by standard commutative algebra, $\mathrm{Tor}_i^{S(\fg \times \fg/\fg_{\Delta})}(S(\fg/\fk),M) \simeq H_i(\wedge(\fk_R) \otimes M)$, see \cite[Chapter 16]{matsumura_1987}. This proves (i). (ii) follows immediately. (iii) is a consequence of the fact that $\iota$ is closed, hence proper. (iv) is explained in \cite[Chapter 5.2.5(iii)]{Chriss-Ginzburg}.
\end{proof}

Now let $\cN \subset \fg^*$ be the nilpotent cone. Write
$$\cN_{\theta} := \cN \cap (\fg/\fk)^*, \qquad \cN_{-\Delta} := \{(\xi,-\xi) \mid \xi \in \cN\}$$
Note that $\iota^{-1}(\cN_{-\Delta}) = \cN_{\theta}$. So by Lemma \ref{lem:propsofRcommutative}(iii) and (iv), there are functors
\begin{equation}\label{eq:LiRcommutative}L_iR: \Coh^{G_{\Delta}}(\cN_{-\Delta}) \to \Coh^K(\cN_{\theta})\end{equation}
Taking the alternating sum, we get a group homomorphism
\begin{equation}\label{eq:Rcommutative}R: K^{G_{\Delta}}(\cN_{-\Delta}) \to K^K(\cN_{\theta}), \qquad R[M] = \sum_i (-1)^i [L_iR(M)].\end{equation}
(the sum on the right is finite thanks to Lemma \ref{lem:propsofRcommutative}(iii)). We explained in \cite{MBfunctions} how to compute this homomorphism. The group $\CC^{\times}$ acts by dilation on $(\fg \times \fg)^*$ and this action preserves the subsets $\fg_{\Delta}$, $(\fg/\fk)^*$, $\cN_{\theta}$, and $\cN_{-\Delta}$. There are natural graded versions of (\ref{eq:LiRcommutative}) and (\ref{eq:Rcommutative}) (denoted in the same way) and the following diagram commutes
\begin{center}
    \begin{tikzcd}
    K^{G_{\Delta}}(\cN_{-\Delta}) \ar[r,"R"] & K^K(\cN_{\theta})\\
    K^{G_{\Delta} \times \CC^{\times}}(\cN_{-\Delta}) \ar[r,"R"] \ar[u,twoheadrightarrow] & K^{K\times \CC^{\times}}(\cN_{\theta}) \ar[u,twoheadrightarrow]
    \end{tikzcd}
\end{center}
Let $C$ be the virtual finite-dimensional graded $K$-representation
\begin{equation}\label{eq:Koszul}C = \sum_i (-1)^i \wedge^i(\fk),\end{equation}
(with $\fk$ in degree $1$). Then the homomorphism $R:K^{G_{\Delta} \times \CC^{\times}}(\cN_{-\Delta}) \to K^{K\times \CC^{\times}}(\cN_{\theta})$ is uniquely determined by the following identity of virtual graded $K$-representations
\begin{equation}\label{eq:C}\Gamma(\cN_{\theta},R[M]) = \Gamma(\cN_{-\Delta},[M]) \otimes C\end{equation}
where $\Gamma(\cN_{-\Delta},[M])$ is regarded as a $K$-representation via $K \xrightarrow{\sim} K_{\Delta}$. This is \cite[Corollary 3.0.7]{MBfunctions}. We will also need the following.

\begin{prop}[Corollary 6.0.2, \cite{MBfunctions}]\label{prop:split}
Suppose $G_{\RR}$ is split modulo center. Then 
$$R[\cO_{\cN_{-\Delta}}] = [\cO_{\cN_{\theta}}].$$
\end{prop}

\section{Restriction of representations}\label{sec:functors}

We continue with the notation of Section \ref{sec:coherent}. Consider the categories
\begin{align*}
    M(G_{\RR}) &= \text{category of } (\fg,K)\text{-modules}\\
    M(G) &= \text{category of } (\fg \times \fg, G_{\Delta})\text{-modules}
\end{align*}
Use the subscripts `fl' and `fg' for the full subcategories of finite-length and finitely-generated modules, e.g. $M_{fl}(G_{\RR})$, $M_{fg}(G_{\RR})$ etc. Recall that $M_{fl}(G_{\RR})$  (resp. $M_{fl}(G)$) is equivalent to the category of finite-length admissible representations of $G_{\RR}$ (resp. $G$).

If $X \in M(G)$, then $X/\mathfrak{k}_RX$ has the structure of a $(\fg,K)$-module---$\fg$ acts via $\fg \simeq \fg_L \subset \fg \times \fg$ and $K$ acts via $K \simeq K_{\Delta} \subset G_{\Delta}$. This defines a right-exact functor
$$\mathcal{R}: M(G) \to M(G_{\RR}), \qquad \mathcal{R}(X) = X/\mathfrak{k}_R X$$
Since the category $M(G)$ has enough projectives, see \cite[Cor 2.37]{KnappVogan1995}, we can form the left-derived functors 
$$L_i\mathcal{R}: M(G) \to M(G_{\RR}), \qquad i \geq 0.$$
As a vector space, $L_i\mathcal{R}(X)$ is just the Lie algebra homology $H_i^{\mathfrak{k}_2}(X,\CC)$. 

To compute $L_i\mathcal{R}(X)$, we form the Koszul complex $\wedge(\fk_R) \otimes X$
\begin{equation}\label{eq:Koszul1}0 \to \wedge^d(\fk_R) \otimes X \to \wedge^{d-1}(\fk_R) \otimes X \to ... \to \wedge^1(\fk_R) \otimes X \to X \to 0\end{equation}
Here $d=\dim(\fk)$. The differentials are given by
\begin{align*}
\partial(X_1 \wedge ... \wedge X_n \otimes v) = &\sum_{i=1}^n (-1)^i (X_1 \wedge... \wedge \widehat{X}_i \wedge ... \wedge X_n \otimes X_iv)\\
+ &\sum_{j < k} (-1)^{j+k}([X_j, X_k] \wedge X_1 \wedge ... \wedge \widehat{X}_j \wedge ... \wedge \widehat{X}_k \wedge ... \wedge X_n \otimes v)\end{align*}
Note that the terms in the complex $\wedge(\fk_R) \otimes X$ are $(\fg,K)$-modules (again, via $(\fg,K) \simeq (\fg_L, K_{\Delta})$), and the differentials are $(\fg,K)$-module homomorphisms. We begin by recording some elementary properties of the modules $L_i\mathcal{R}(X)$. Choose a Cartan subalgebra $\fh \subset \fg$ and let $W$ denote the Weyl group of $G$.

\begin{lemma}\label{lem:propsofR}
Let $X \in M(G)$. Then the following are true for each $i \geq 0$
\begin{itemize}
    \item[(i)] There is a $(\fg,K)$-module isomorphism
$$L_i\mathcal{R}(X) \simeq H_i(\wedge(\fk_R)\otimes X).$$
    \item[(ii)] $L_i\mathcal{R}(X)=0$ for $i > d = \dim(\fk)$.
    \item[(iii)] If $X$ has finite support over $\Spec(\zeta(\fg)) \times \Spec(\zeta(\fg))=\Spec(\zeta(\fg \times \fg))$, then $L_i\mathcal{R}(X)$ has finite support over $\Spec(\zeta(\fg))$.
\item[(iv)] If $X$ has infinitesimal character $(\lambda_L,\lambda_R) \in \fh^*/W \times \fh^*/W$, then $L_i\mathcal{R}(X)$ has infinitesimal character $\lambda_L$.
\end{itemize}
\end{lemma}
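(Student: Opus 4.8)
The plan is to prove the four parts in order, since each is essentially a transcription of the corresponding statement in the commutative setting (Lemma \ref{lem:propsofRcommutative}) into the setting of $(\fg,K)$-modules, using the Koszul complex $\wedge(\fk_R) \otimes X$ as the computational bridge.

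\textbf{Part (i).} I would argue that the Koszul complex $\wedge(\fk_R) \otimes X$ is a resolution-type gadget computing the left derived functors of $\mathcal{R}$. The cleanest route: since $M(G)$ has enough projectives, pick a projective resolution $P_\bullet \to X$ in $M(G)$; then $L_i\mathcal{R}(X) = H_i(P_\bullet/\fk_R P_\bullet)$. On the other hand, $\mathcal{R}(X) = X/\fk_R X = \CC \otimes_{U(\fk_R)} X$ (working with the enveloping algebra $U(\fk_R)$, with $\CC$ the trivial module), so $L_i\mathcal{R}(X) = \operatorname{Tor}_i^{U(\fk_R)}(\CC, X)$ as a vector space, and this is computed by the standard Chevalley--Eilenberg/Koszul complex $\wedge(\fk_R)\otimes X$, which is a free resolution of $\CC$ over $U(\fk_R)$ tensored with $X$. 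The one thing requiring a sentence of care is that $\mathcal{R}$ is defined on the category of $(\fg,K)$-modules, not merely $U(\fk_R)$-modules, so I must check that the identification $L_i\mathcal{R}(X) \simeq H_i(\wedge(\fk_R)\otimes X)$ respects the $(\fg,K)$-module structure: this is because, as noted in the excerpt, the terms of $\wedge(\fk_R)\otimes X$ are $(\fg,K)$-modules via $(\fg,K)\simeq(\fg_L,K_\Delta)$ and the differentials are $(\fg,K)$-module maps, so the homology inherits a $(\fg,K)$-structure, and a standard comparison (the Koszul complex $\wedge(\fk_R)\otimes P_j$ is acyclic in positive degrees for each projective $P_j$, giving a double-complex/spectral-sequence argument) identifies it with the derived functor together with its structure. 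This mirrors exactly the proof of Lemma \ref{lem:propsofRcommutative}(i).

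\textbf{Parts (ii) and (iii).} Part (ii) is immediate from (i): the complex $\wedge(\fk_R)\otimes X$ vanishes in degrees $>d=\dim\fk$, so its homology does too. For (iii), I would observe that $\zeta(\fg)$ acting via $\fg_L$ and $\zeta(\fg)$ acting via $\fg_R$ both act on each term $\wedge^n(\fk_R)\otimes X$ and commute with the differentials (the $\fg_L$-action does by the structure just described; the $\fg_R$-action does because $\wedge(\fk_R)$ is built from $\fk_R$ and the differential involves only $\fk_R$-actions and brackets, all internal to the second factor), so $\zeta(\fg\times\fg) = \zeta(\fg)\otimes\zeta(\fg)$ acts on the homology. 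Hence if $X$ is supported on a finite subset of $\operatorname{Spec}(\zeta(\fg))\times\operatorname{Spec}(\zeta(\fg))$, so is $H_i(\wedge(\fk_R)\otimes X)$, and therefore the image of $\zeta(\fg)$ (acting via $\fg_L$, which is how $\zeta(\fg)$ acts on $L_i\mathcal{R}(X)$) is supported on the finite image under the first projection. The main point to spell out is which copy of $\zeta(\fg)$ gives the $L_i\mathcal{R}(X)$-action — the left one, via $\fg\simeq\fg_L$.

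\textbf{Part (iv).} This is the special case of (iii) where the support is a single point $(\lambda_L,\lambda_R)$, refined to track the actual character rather than just the support. If $X$ has infinitesimal character $(\lambda_L,\lambda_R)$, then $\zeta(\fg)_L$ acts on $X$ by the character $\lambda_L$ and $\zeta(\fg)_R$ acts by $\lambda_R$; since both actions are by scalars and commute with the differentials of $\wedge(\fk_R)\otimes X$, they act by the same scalars on the homology, so $L_i\mathcal{R}(X) = H_i(\wedge(\fk_R)\otimes X)$ has $\zeta(\fg)$ (via $\fg_L$) acting by $\lambda_L$, i.e.\ infinitesimal character $\lambda_L$. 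I should also note that finite length or finite generation of $X$ is not needed for (i), (ii), (iv), and that (iii)/(iv) require only the stated support/character hypothesis; no extra finiteness input is used here (the finite-length statements in the abstract are presumably handled in Proposition \ref{prop:propsofR}).

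I do not expect a genuine obstacle: the only slightly delicate point is the $(\fg,K)$-equivariance in part (i) — making sure the abstract derived-functor definition and the concrete Koszul-complex computation agree as $(\fg,K)$-modules and not merely as vector spaces — which is handled by the double-complex comparison. Everything else is bookkeeping about which of the two central subalgebras acts where.
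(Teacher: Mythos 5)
Your proof is correct and follows essentially the same route as the paper: the paper disposes of (i) by citing \cite[Thm 2.122]{KnappVogan1995} rather than spelling out the double-complex/spectral-sequence comparison as you do, and then derives (ii)--(iv) from the subquotient structure of the homology exactly as you describe. The only cosmetic difference is that for (iii) the paper works directly with the $\fz(U(\fg_L))$-support and its inclusion in $\mathrm{Supp}(\wedge^i(\fk_R)\otimes X) = \mathrm{Supp}(X)$, whereas you route through the full $\fz(U(\fg\times\fg))$-support before projecting; both come to the same thing.
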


\begin{proof}
(i) is a special case of \cite[Thm 2.122]{KnappVogan1995}. (ii) follows from (i). By (i), we have that $L_i\mathcal{R}(X)$ is a $\fz(U(\fg))$-module subquotient of $\wedge^i(\fk_R) \otimes X$. Hence
\begin{align*}
\mathrm{Supp}_{\mathrm{Spec}(\fz(U(\fg)))}(L_i(\mathcal{R}(X)) &\subseteq \mathrm{Supp}_{\mathrm{Spec}(\fz(U(\fg)))}(\wedge^i(\fk_R) \otimes X)\\
&= \mathrm{Supp}_{\mathrm{Spec}(\fz(U(\fg)))}(X)\\
&= \mathrm{Spec}(\fz(U(\fg))) \cap \mathrm{Supp}_{\mathrm{Spec}(\fz(U(\fg))) \times \mathrm{Spec}(\fz(U(\fg)))}(X)\end{align*}
(iii) follows at once. If $X$ has infinitesimal character $(\lambda_L,\lambda_R)$, then each $\wedge^i(\fk_R) \otimes X$ has infinitesimal character $\lambda_L$ as a $U(\fg_L)$-module. So (iv) also follows from (i).
\end{proof}

To go further, we will need to bring \emph{good filtrations} into the picture. For details and proofs, we refer the reader to \cite[Section 2]{Vogan1991}. Suppose $X \in M(G_{\RR})$. A compatible filtration is an increasing filtration by $K$-invariant subspaces $\{X_n \mid n \in \ZZ\}$ such that $U_m(\fg)X_n \subseteq X_{m+n}$. If $X$ is equipped with a compatible filtration, then $\gr(X)$ has the structure of a $K$-equivariant $S(\fg/\fk)$-module (equivalently, of an object in $\mathrm{QCoh}^K(\fg/\fk)^*$). The filtration is said to be \emph{good} if this module is finitely-generated (equivalently, if the associated sheaf in $\mathrm{QCoh}^K(\fg/\fk)^*$ is coherent). Note that $X$ admits a good filtration if and only if $X \in M_{fg}(G_{\RR})$. Good filtrations are not unique, but the class $[\gr(X)]$ in $K^K(\fg/\fk)^*$ is well-defined. The \emph{associated variety} of $X$ is the $K$-invariant subset $\mathcal{V}(X) = \mathrm{Supp}(\gr(X)) \subset (\fg/\fk)^*$ (since support is additive on short exact sequences, $\mathcal{V}(X)$ is well-defined). If $X \in M_{fl}(G_{\RR})$, then $\mathcal{V}(X)$ is contained in $\cN_{\theta}$. Thus, we obtain a group homomorphism
$$\gr: KM_{fl}(G_{\RR}) \to K^K(\cN_{\theta}).$$
Similarly (replacing $G$ with $G \times G$ and $K$ with $G_{\Delta}$) we get a group homomorphism
$$\gr: KM_{fl}(G) \to K^{G_{\Delta}}(\cN_{-\Delta}).$$
Our goal is to prove

\begin{prop}\label{prop:propsofR}
The following are true:
\begin{itemize}
    \item[(i)] For $i \geq 0$, the functors $L_i\mathcal{R}$ preserve finite generation and finite length, i.e. they restrict to functors
    $$L_i\mathcal{R}: M_{fg}(G) \to M_{fg}(G_{\RR}), \qquad L_i\mathcal{R}: M_{fl}(G) \to M_{fl}(G_{\RR}).$$
\end{itemize}
Let $X \in M_{fg}(G)$ and choose a good filtration $\{X_{n}\}$ of $X$. Then
\begin{itemize}
    \item[(ii)] The filtration on $X$ induces natural compatible filtrations on $L_i\mathcal{R}(X)$ for $i \geq 0$.
    \item[(iii)] The filtrations in (ii) are good.
    \item[(iv)]  $\mathcal{V}(L_i\mathcal{R}(X)) \subseteq \mathrm{Supp}_{(\fg/\fk)^*}(L_iR(\gr(X)))$ for $i \geq 0$.
    \item[(v)] If $X \in M_{fl}(G)$, then $\gr (L_i\mathcal{R}(X))$ and $L_iR(\gr(X))$ are classes in $K^K(\cN_{\theta})$ and there is an identity
    $$\sum_{i=0}^d (-1)^i [\gr(L_i\mathcal{R}(X))] = \sum_{i=0}^d (-1)^i [L_iR(\gr(X))]$$
    %
\end{itemize}
\end{prop}

The proof will come after a lemma.

\begin{lemma}\label{lem:spectral}
Let $R$ be a filtered ring with an incrasing filtration $R=\bigcup R^m$ such that $S=\gr(R)$ is commutative and Noetherian. Suppose $(A,d)$ is a bounded complex of $R$-modules
$$0 \to {}^0A \overset{d}{\to} {}^1A \overset{d}{\to} ... \overset{d}{\to} {}^dA \to 0$$
Choose an increasing filtration ${}^iA = \bigcup_p {}^{i}A^p$ on each ${}^iA$ such that
\begin{itemize}
    \item $R^m({}^iA^p) \subseteq {}^iA^{m+p}$.
    \item $d({}^iA^p) \subseteq {}^{i+1}A^{p}$.
\end{itemize}
Form the spectral sequence $\{E_r \mid r \geq 0\}$ associated to the filtered complex $A$. Write
$${}^nE_r := \bigoplus_{p+q=n} E_r^{p,q}$$
so that $E_r$ is a complex
$$...\to {}^0E_r \overset{d_r}{\to} {}^1E_r \overset{d_r}{\to} ... \overset{d_r}{\to} {}^dE_r \to ... $$
(with degree-$r$ differentials). Then
\begin{itemize}
    \item[(i)] Each ${}^iE_r$ and ${}^iE_{\infty}$ is naturally a graded $S$-module. The graded $S$-module structure on ${}^iE_{\infty}$ coincides with the graded $S$-module structure on $\gr(H_i(A))$ under the natural identification $E_{\infty} \simeq \gr(H(A))$.
    \item[(ii)] ${}^iE_{\infty} = {}^iE_r = 0$ for $i<0$ or $i > d$.
    \item[(iii)] ${}^iE_{\infty}$ and ${}^iE_{r+1}$ are $S$-module subquotients of ${}^iE_r$ for all $r$ and $i$.
\end{itemize}
Suppose, in addition, that ${}^iE_1$ are \emph{finitely-generated} $S$-modules for all $i$. Then
\begin{itemize}
    \item[(iv)] ${}^iE_r$ and ${}^iE_{\infty}$ are finitely-generated $S$-modules for all $r$ and $i$.
    \item[(v)] There is an inclusion $\mathrm{Supp}_{\mathrm{Spec}(S)}({}^iE_{\infty}) \subseteq \mathrm{Supp}_{\mathrm{Spec}(S)}({}^iE_r)$ for all $r$ and $i$.
    \item[(vi)] There is an identity in the Grothendieck group of finitely-generated $S$-modules
    $$\sum_{i=0}^d (-1)^i ({}^iE_r) = \sum_{i=0}^d (-1)^i ({}^iE_{\infty})$$
    for all $r$.
\end{itemize}
\end{lemma}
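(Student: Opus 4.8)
The plan is to identify $\{E_r\}$ with the spectral sequence of the exhaustive increasing filtration of $A$ by the subcomplexes $F_pA:={}^\bullet A^p$ — these are subcomplexes precisely because $d({}^iA^p)\subseteq{}^{i+1}A^p$ — which in the situations of interest is also bounded below; all $S=\gr R$-module structures below come from $R^m({}^iA^p)\subseteq{}^iA^{m+p}$. For (i) the key observation is that $S$ acts on the entire spectral sequence: a class $\bar s\in S_m$ lifts to $s\in R^m$, and multiplication by $s$ is a filtered chain endomorphism of $A$ of degree $m$, hence induces maps $E_r^{p,q}\to E_r^{p+m,q-m}$ on every page and on $E_\infty$; this is independent of the lift because an element of $R^{m-1}$ maps $F_p$ into $F_{p+m-1}$ and so induces $0$ already on $E_0=\gr A$. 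Thus each ${}^iE_r$ and ${}^iE_\infty$ is a graded $S$-module, each $d_r$ is $S$-linear (up to a shift of the grading), ${}^iE_1=H_i(\gr A)$ acquires its structure as the cohomology of the complex of graded $S$-modules $E_0=\gr A$, and under the convergence isomorphism $E_\infty^{p,q}\cong\gr_p H_{p+q}(A)$ — valid for an exhaustive bounded-below filtration, and induced by the tautological, visibly $S$-equivariant maps out of $F_p$-cocycles — the structure on ${}^iE_\infty$ is identified with that on $\gr H_i(A)$.

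Parts (ii)--(v) are then formal. For (ii): ${}^iE_1=H_i(\gr A)=0$ for $i\notin[0,d]$ because $A^i=0$ there, and every later ${}^iE_r$ and ${}^iE_\infty$ is a subquotient of it. For (iii): ${}^iE_{r+1}=\ker(d_r)/\mathrm{im}(d_r)$ is visibly an $S$-subquotient of ${}^iE_r$, and via the standard description $E_r^{p,q}=Z_r^{p,q}/B_r^{p,q}$ with $0=B_1\subseteq\cdots\subseteq B_\infty\subseteq Z_\infty\subseteq\cdots\subseteq Z_1=E_1$ and $E_\infty^{p,q}=Z_\infty^{p,q}/B_\infty^{p,q}$, ${}^iE_\infty$ is an $S$-subquotient of each ${}^iE_r$. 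For (iv): $S$ is Noetherian, so subquotients of finitely generated $S$-modules are finitely generated; start from ${}^iE_1$ and apply (iii). For (v): the support of a subquotient lies in that of the ambient module; apply (iii).

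The content is (vi). First the Euler characteristic is constant along the pages: for $r\geq1$ the complex $\big({}^0E_r\to\cdots\to{}^dE_r,\ d_r\big)$ is a bounded complex of finitely generated $S$-modules (by (ii), (iv)) with cohomology ${}^\bullet E_{r+1}$, so breaking it into short exact sequences and taking alternating sums gives $\sum_i(-1)^i[{}^iE_r]=\sum_i(-1)^i[{}^iE_{r+1}]$; hence $\chi:=\sum_i(-1)^i[{}^iE_r]$ is independent of $r\geq1$, and it remains to show $\chi=\sum_i(-1)^i[{}^iE_\infty]$. The chain in (iii) gives $[{}^iE_r]=[Z_r^i/Z_\infty^i]+[{}^iE_\infty]+[B_\infty^i/B_r^i]$ (with $Z_r^i:=\bigoplus_{p+q=i}Z_r^{p,q}$, etc.), so (vi) reduces to the vanishing of the ``error'' $\sum_i(-1)^i\big([Z_r^i/Z_\infty^i]+[B_\infty^i/B_r^i]\big)$. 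This is the step where one must work: the filtration on $A$ is only one-sidedly bounded, so the spectral sequence need not degenerate at a finite page. But in the situations at hand each $E_1^{p,q}=H_i(\gr_pA^\bullet)$ has finite length over $S_0$, so the ascending chain $B_s^{p,q}$ and the descending chain $Z_s^{p,q}$ terminate and $E_s^{p,q}=E_\infty^{p,q}$ for $s$ large (depending on $(p,q)$). Granting this, $Z_r^{p,q}/Z_\infty^{p,q}$ and $B_\infty^{p,q}/B_r^{p,q}$ carry finite filtrations whose graded pieces, for $s\geq r$, are $Z_s^{p,q}/Z_{s+1}^{p,q}$ and $B_{s+1}^{p,q}/B_s^{p,q}$ respectively; both of these are isomorphic as $S$-modules to $\mathrm{im}\big(d_s\colon E_s^{p,q}\to E_s^{p',q'}\big)$, where $(p',q')$ is the $d_s$-target, in cohomological degree $(p+q)+1$. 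Summing over all bidegrees and all $s\geq r$, the contribution of each $d_s$ to $\sum_i(-1)^i[Z_r^i/Z_\infty^i]$ is the negative of its contribution to $\sum_i(-1)^i[B_\infty^i/B_r^i]$, the bijection source $\leftrightarrow$ target shifting cohomological degree by one, so the error vanishes and (vi) follows. (When the $A^i$ are finitely generated over $R$, one can bypass the spectral sequence here: $[{}^iE_\infty]=[\gr H_i(A)]$, $[{}^iE_1]=[H_i(\gr A)]$, and applying the well-defined homomorphism $[M]\mapsto[\gr M]$ from the Grothendieck group of finitely generated $R$-modules to that of finitely generated $S$-modules — well-definedness being the standard fact that $[\gr M]$ is independent of the good filtration — to the Euler-characteristic identity $\sum_i(-1)^i[A^i]=\sum_i(-1)^i[H_i(A)]$ gives $\chi=\sum_i(-1)^i[{}^iE_\infty]$ directly.) The main obstacle, then, is exactly the passage from finite pages to $E_\infty$ — concretely, the need to know the spectral sequence stabilizes bidegree-by-bidegree.
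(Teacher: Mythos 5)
Your treatment of (i)--(v) matches the paper's in substance. For (i) the paper verifies directly that $R^m$ carries each of $A^{p,q}$, $Z_r^{p,q}$, $Z_\infty^{p,q}$, $B_\infty^{p,q}$ into the corresponding object with $p\mapsto p+m$, and hence acts on $E_r^{p,q}$ and $E_\infty^{p,q}$; your ``multiplication by a lift of $\bar s$ is a filtered chain endomorphism of degree $m$'' is the same argument phrased one level up, and the independence-of-lift remark is exactly what makes the $S$-action well-defined. (ii)--(v) are deduced in both proofs from $^iE_{r+1}=H_i({}^\bullet E_r)$, the subquotient structure, and Noetherianity of $S$.

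The genuine divergence is (vi). The paper disposes of it with one line (``induction on $r$, see [KnappVogan1995, D.30]''); you instead write out the argument, and in doing so you have put your finger on the real content: constancy of the Euler characteristic from page $r$ to $r+1$ is formal, but passing to $E_\infty$ requires a convergence statement, and you reduce it to bidegree-by-bidegree stabilization. Your mechanism for obtaining this --- finite length of each $E_1^{p,q}$ over $S_0$, together with an at least bounded-below filtration so that $\bigcap_s Z_s^{p,q}=Z_\infty^{p,q}$ and $\bigcup_s B_s^{p,q}=B_\infty^{p,q}$ --- does not follow from the lemma's stated hypotheses alone ($S$ commutative Noetherian, $^iE_1$ finitely generated); it needs, in addition, that $S_0$ be Artinian and the filtration on $A$ be regular. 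Both hold in the paper's application ($S_0=\CC$, and good filtrations are bounded below), and presumably the cited D.30 carries analogous hypotheses, so this is less a defect of your argument than a correct observation that the lemma as stated is slightly under-hypothesized. You flag the issue honestly rather than eliding it, which is appropriate. Your parenthetical alternative --- apply the well-defined $\gr$-map $K(\text{f.g. }R\text{-mod})\to K(\text{f.g. }S\text{-mod})$ to $\sum_i(-1)^i[A^i]=\sum_i(-1)^i[H_i(A)]$ when each $A^i$ is finitely generated over $R$ --- is clean and does cover the paper's application, where $A^i=\wedge^i(\fk_R)\otimes X$ with $X$ finitely generated; it bypasses the spectral-sequence convergence question entirely and would have been a reasonable route for the paper to take.
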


\begin{proof}
As usual, let
\begin{align*}
    A^{p,q} &= {}^{p+q}A \cap A^p\\
    Z_r^{p,q} &= \{z \in A^{p,q} \mid dz \in A^{p-r,q+r+1}\}\\
    E_r^{p,q} &= Z_r^{p,q}/(Z_{r-1}^{p-1,q+1} + dZ_{r-1}^{p+r-1,q-r})\\
    Z_{\infty}^{p,q} &= A^{p,q} \cap \ker{(d)}\\
    B^{p,q}_{\infty} &= A^{p,q} \cap \mathrm{im}(d)\\
    E_{\infty}^{p,q} &= Z_{\infty}^{p,q}/(Z_{\infty}^{p-1,q+1}+B_{\infty}^{p,q})
\end{align*}
Note that 
\begin{enumerate}
    \item $R^m(A^{p,q}) \subseteq A^{p+m,q-m}$.
    \item $R^m(Z_r^{p,q}) \subseteq Z_r^{p+m,q-m}$.
    \item $R^m(Z_{\infty}^{p,q}) \subseteq Z_{\infty}^{p+m,q-m}$.
    \item $R^m(B_{\infty}^{p,q}) \subseteq B_{\infty}^{p+m,q-m}$.
\end{enumerate}
It follows that
$$R^m(Z_{r-1}^{p-1,q+1} + dZ_{r-1}^{p+r-1,q-r}), R^{m-1}(Z_r^{p,q}) \subseteq Z_{r-1}^{p+m-1,q-m+1} + dZ_{r-1}^{p+m+r-1,q-m-r}$$
and
$$R^m(Z_{\infty}^{p-1,q+1} + B_{\infty}^{p,q}), R^{m-1}(Z_{\infty}^{p,q}) \subseteq Z_{\infty}^{p+m-1,q-m+1} + B_{\infty}^{p+m,q-m}$$
So $R^m$ carries $E_r^{p,q}$ to $E_r^{p+m,q-m}$ and $E_{\infty}^{p,q}$ to $E_{\infty}^{p+m,q-m}$. This proves (i).

It is well-known that $^{i}E_{r+1} = H_i({}^{\bullet}E_r)$. (ii) follows. (iii) is immediate from the definitions of ${}^iE_r$ and ${}^iE_{\infty}$.

Suppose $X$ and $Y$ are $S$-modules and $X$ is a subquotient of $Y$. Since $S$ is Noetherian
$$Y \text{ finitely-generated} \implies X \text{ finitely-generated}$$
Since support is additive on short exact sequences
$$\mathrm{Supp}_{\mathrm{Spec}(S)}(X) \subseteq \mathrm{Supp}_{\mathrm{Spec}(S)}(Y)$$
Now (iv) and (v) follow immediately from (iii). (vi) follows by induction on $r$, see \cite[D.30]{KnappVogan1995}.
\end{proof}

\begin{proof}[Proof of Proposition \ref{prop:propsofR}]
Let $X \in M_{fg}(G)$ and choose a good filtration $\{X_m\}$ of $X$. Each term in the Koszul complex \ref{eq:Koszul1} acquires an increasing filtration (with $\fk_R$ in degree $1$). These filtrations induce natural compatible filtrations on the homology groups $H_i(\wedge(\fk_R) \otimes X)$ and hence, by Lemma \ref{lem:propsofR}(i), on the modules $L_i\mathcal{R}(X)$. This proves (ii). 

By Lemma \ref{lem:propsofRcommutative}(i)
$${}^iE_1 \simeq H_i(\wedge(\fk_R) \otimes \gr(X)) \simeq L_iR(\gr(X)), \qquad \forall i$$
Since $\gr(X) \in \Coh^{G_{\Delta}}(\fg \times \fg/\fg_{\Delta})^*$, we have $L_iR(\gr(X)) \in \Coh^K(\fg/\fk)^*$ by Lemma \ref{lem:propsofRcommutative}(ii). On the other hand
$${}^iE_{\infty} \simeq \gr(L_i\mathcal{R}(X)), \qquad \forall i$$
So by Lemma \ref{lem:spectral}(iv), $\gr(L_i\mathcal{R}(X)) \in \Coh^K(\fg/\fk)^*$, i.e. the filtration on $L_i\mathcal{R}(X)$ are good. This proves (iii). (iv) is an immediate consequence of Lemma \ref{lem:propsofR}(v). And (v) is an immediate consequence of Lemma \ref{lem:propsofR}(vi). 

If a $(\fg,K)$-module admits a good filtration, it must be finitely-generated. So $L_i\mathcal{R}(X) \in M_{fg}(G_{\RR})$ by (iii). 
A $(\fg,K)$-module has finite length if and only if it is finitely-generated and finitely supported over $\mathrm{Spec}(\fz(\fg))$. So if $X \in M_{fl}(G)$, then $L_i\mathcal{R}(X) \in M_{fl}(G_{\RR})$ by Lemma \ref{lem:propsofR}(ii).
This proves (i). 
\end{proof}

Consider the group homomorphism
$$\mathcal{R}: KM_{fl}(G) \to KM_{fl}(G_{\RR}), \qquad \mathcal{R}[X] = \sum_{i=0}^{\infty} [L_i\mathcal{R}X]$$
This is well-defined by Lemma \ref{lem:propsofR}(ii) and Proposition \ref{prop:propsofR}(i). The following is an immediate consequence of Proposition \ref{prop:propsofR}(iv).

\begin{cor}\label{cor:Rgr}
The following diagram commutes
\begin{center}
    \begin{tikzcd}
    KM_{fl}(G) \ar[d,"\gr"] \ar[r,"\mathcal{R}"] & KM_{fl}(G_{\RR}) \ar[d,"\gr"] \\
    K^{G_{\Delta}}(\cN_{-\Delta}) \ar[r,"R"] & K^K(\cN_{\theta})
    \end{tikzcd}
\end{center}
\end{cor}

The following is a consequence of Corollary \ref{cor:Rgr} and Equation (\ref{eq:C}). 

\begin{cor}\label{cor:Ktypes}
Let $X \in KM_{fl}(G)$, and choose an (arbitrary) good filtration. Then as representations of $K$
$$\mathcal{R}[X] =_K \gr[X] \otimes C$$
where $C$ is the signed graded Koszul class (cf. \ref{eq:Koszul}). 
\end{cor}

\section{Case of split groups}\label{sec:split}

Now assume that $G_{\RR}$ is split. Fix a split maximal torus $H \subset G$ and a Borel subgroup $B=HN \subset G$. Write $H_{\RR} \subset G_{\RR}$ and $B_{\RR} \subset G_{\RR}$ for the groups of real points. Let 
$$H= TA$$
be the Cartan decomposition of $H$. Here, $T$ is a product of circle groups and $A$ is a complex vector group. Let $T_{\RR} = T \cap H_{\RR}$ and $A_{\RR} = A \cap H_{\RR}$. Then $T_{\RR}$ is a product of cyclic groups of order 2, $A_{\RR}$ is a real vector group, and
$$H_{\RR} = T_{\RR}A_{\RR}$$
is the Cartan decomposition of $H_{\RR}$. There are bijections
\begin{equation}\label{eq:characters}
    \widehat{H} \simeq \widehat{T} \times \widehat{A} \simeq X^*(H) \times \fh^*, \qquad 
    \widehat{H}_{\RR} \simeq \widehat{T}_{\RR} \times \widehat{A}_{\RR} \simeq X^*(H)/2X^*(H) \times \fh^*
\end{equation}
IF $(\lambda,\nu) \in X^*(H) \times \fh^*$ (resp. $(\overline{\lambda},\nu) \in X^*(H)/2X^*(H) \times \fh^*$), write $\CC(\lambda,\nu)$ (resp. $\CC(\overline{\lambda},\nu)$) for the corresponding character of $H$ (resp. $H_{\RR}$). Note that the character $\CC(\lambda,\nu)$ of $H$ has infinitesimal character $\frac{1}{2}(\nu + \lambda, \nu - \lambda) \in \fh^*\times \fh^*$ and the character $\CC(\overline{\lambda},\nu)$ has infinitesimal character $\nu \in \fh^*$. Denote the principal series of $G$ and $G_{\RR}$ by
$$I_{\CC}(\lambda, \nu) := \Ind^G_B \CC(\lambda,\nu), \qquad I_{\RR}(\overline{\lambda},\nu) := \Ind^{G_{\RR}}_{B_{\RR}} \CC(\overline{\lambda},\nu) \qquad \text{(normalized induction)}.$$
Since the induction is normalized, $I_{\CC}(\lambda,\nu)$ has infinitesimal character $\frac{1}{2}(\nu + \lambda, \nu - \lambda) \in \fh^*/W \times \fh^*/W$ and $I_{\RR}(\overline{\lambda},\nu)$ has infinitesimal character $\nu \in \fh^*/W$.

Write $\Lambda \subset X^*(H)$ for the lattice of weights of algebraic representations of $G$ and $\Lambda_{\CC} \subset \widehat{H}$ (resp. $\Lambda_{\RR} \subset \widehat{H}_{\RR}$) for the lattice of weights of finite-dimensional representations of $G$ (resp. $G_{\RR}$). Then, under the bijections (\ref{eq:characters})
$$\Lambda_{\CC} \simeq \{(\epsilon,\delta) \in X^*(H) \times \fh^* \mid  \frac{1}{2}(\delta \pm \epsilon) \in \Lambda\}, \qquad \Lambda_{\RR} \simeq \{(\overline{\epsilon},\epsilon) \in X^*(H)/2X^*(H) \times \fh^* \mid \epsilon \in \Lambda\}.$$
For $(\epsilon,\delta) \in \Lambda_{\CC}$, the Langlands subquotient $F_{\CC}(\epsilon,\delta)$ of $I_{\CC}(\epsilon,\delta+2\rho)$ is an irreducible finite-dimensional representation of $G$. As a representation of $\fg \times \fg$, it is isomorphic to $F(\frac{1}{2}(\delta+\epsilon)) \otimes F(\frac{1}{2}(\delta-\epsilon))$, where $F(\epsilon)$ is the irreducible algebraic representation of $G$ with extremal weight $\epsilon$. For $(\overline{\epsilon},\epsilon) \in \Lambda_{\RR}$, the Langlands subquotient $F_{\RR}(\overline{\epsilon},\epsilon)$ of $I_{\RR}(\overline{\epsilon},\epsilon+\rho)$ is an irreducible finite-dimensional representation of $G_{\RR}$. It is isomorphic to the restriction of the algebraic representation $F(\epsilon)$ of $G$ to $G_{\RR}$. 

Recall that the classes $[I_{\CC}(\lambda,\nu)]$ span the Grothendieck group $KM_{fl}(G)$. So to compute the homomorphism $\mathcal{R}: KM_{fl}(G) \to KM_{fl}(G_{\RR})$ it suffices to compute the classes $\mathcal{R} [I_{\CC}(\lambda,\nu)]$ for each $(\lambda,\nu)$. In fact, we will prove the following.

\begin{theorem}\label{thm:principalseries}
For each $(\lambda, \nu) \in X^*(H) \times \fh^*$ there is an identity in $KM_{fl}(G_{\RR})$
$$\mathcal{R} [I_{\CC}(\lambda,\nu)] = [I_{\RR}(\overline{\lambda},\frac{1}{2}(\lambda+\nu))].$$
\end{theorem}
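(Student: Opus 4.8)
The plan is to reduce the computation to the torus $H$ by establishing that $\mathcal{R}$ is compatible with parabolic induction from $B$. Let $\mathcal{R}_H\colon M(H)\to M(H_{\RR})$ be the functor of Section~\ref{sec:functors} for the $\theta$-stable Levi $H\subset G$ and its real form $H_{\RR}$. Since $G_{\RR}$ is split, $H$ is a split Cartan, so $\fk\cap\fh=\fh^{d\theta}=0$; hence $\mathcal{R}_H$ takes no homology and merely re-reads a $(\fh\times\fh,H_{\Delta})$-module as a $(\fh,H_{\RR})$-module via $\fh\simeq\fh_L$ and $T_{\RR}\simeq (T_{\RR})_{\Delta}$. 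Concretely $\mathcal{R}_H\CC(\lambda,\nu)=\CC(\overline\lambda,\tfrac12(\lambda+\nu))$: the holomorphic parameter $\tfrac12(\lambda+\nu)$ of $\CC(\lambda,\nu)$ becomes the $\widehat A_{\RR}=\fh^*$-parameter, and the algebraic $H_{\Delta}$-character $\lambda\in X^*(H)$ becomes, upon restriction to the finite group $T_{\RR}$, the $\widehat T_{\RR}=X^*(H)/2X^*(H)$-parameter $\overline\lambda$. Granting the compatibility
$$L_0\mathcal{R}\bigl(\Ind_B^G\sigma\bigr)\;\cong\;\Ind_{B_{\RR}}^{G_{\RR}}\bigl(\mathcal{R}_H\sigma\bigr),\qquad L_i\mathcal{R}\bigl(\Ind_B^G\sigma\bigr)=0\quad(i>0)$$
(normalized induction, same $\rho$-shift on both sides) and applying it to $\sigma=\CC(\lambda,\nu)$, we get $\mathcal{R}[I_{\CC}(\lambda,\nu)]=[\Ind_{B_{\RR}}^{G_{\RR}}\CC(\overline\lambda,\tfrac12(\lambda+\nu))]=[I_{\RR}(\overline\lambda,\tfrac12(\lambda+\nu))]$, which is the theorem.

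The main obstacle is the induction compatibility, and within it the vanishing of the higher $L_i\mathcal{R}$ on an induced module. I would prove it geometrically: in the bimodule picture $\Ind_B^G\sigma$ is (global sections of) a $G_{\Delta}$-equivariant coherent sheaf supported on the open $G_{\Delta}$-orbit $\cong G/H$ in $(G\times G)/(B\times B)$, and $\mathcal{R}$ is restriction along $\iota$ as in Section~\ref{sec:coherent}; the assertion is then that this restriction is computed by the corresponding open $K$-orbit on the $G_{\RR}$-flag variety (a Matsuki/Beilinson--Bernstein style base-change statement), which reproduces $B_{\RR}$-induction and also gives the acyclicity. An alternative is a direct Lie-algebra homology computation: compute $H_\ast(\fk_R,\Ind_B^G\sigma)$ by a Shapiro-lemma/induction-in-stages argument reducing it to $H_\ast(\fk\cap\fh,\sigma)=\sigma$, concentrated in degree $0$ because $\fk\cap\fh=0$; the delicate point there is producing a resolution of $\Ind_B^G\sigma$ by $\fk_R$-acyclics adapted to the induction.

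Independently of the compatibility, the coarse invariants of the two sides match using the results already in hand, giving both a check and a fallback. They have the same infinitesimal character $\tfrac12(\lambda+\nu)\in\fh^*/W$: for $I_{\RR}(\overline\lambda,\tfrac12(\lambda+\nu))$ this is recorded above, and for $\mathcal{R}[I_{\CC}(\lambda,\nu)]$ it follows from Lemma~\ref{lem:propsofR}(iv), since $I_{\CC}(\lambda,\nu)$ has infinitesimal character $\tfrac12(\nu+\lambda,\nu-\lambda)$. They have the same associated cycle: a good filtration on $I_{\CC}(\lambda,\nu)$ from the Beilinson--Bernstein picture has $\gr[I_{\CC}(\lambda,\nu)]=[\cO_{\cN_{-\Delta}}]$, so Corollary~\ref{cor:Rgr} and Proposition~\ref{prop:split} give $\gr(\mathcal{R}[I_{\CC}(\lambda,\nu)])=R[\cO_{\cN_{-\Delta}}]=[\cO_{\cN_{\theta}}]=\gr[I_{\RR}(\overline\lambda,\tfrac12(\lambda+\nu))]$. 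And they have the same restriction to $K$: restricting the complex principal series to a maximal compact $U$ of $G$ gives $\Ind_{U\cap H}^{U}\CC(\lambda)$, and feeding this through Corollary~\ref{cor:Ktypes} yields $\Ind_{M}^{K_0}\CC(\overline\lambda)$, the restriction to $K$ of $I_{\RR}(\overline\lambda,\tfrac12(\lambda+\nu))$, where $K_0=K\cap G_{\RR}$ and $M=T_{\RR}$.

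These three invariants do not by themselves force equality of classes, because the infinitesimal character and the $K$-restriction only see the $W$-orbits of $\tfrac12(\lambda+\nu)$ and of $\overline\lambda$ \emph{separately}. The remaining ``correlation'' can be pinned down by computing the $B_{\RR}$-Jacquet module $H_\ast(\mathfrak{n}_{\RR},\mathcal{R}[I_{\CC}(\lambda,\nu)])$ as an $H_{\RR}$-module, which I expect to equal $\bigoplus_{w\in W}\CC\bigl(w\cdot(\overline\lambda,\tfrac12(\lambda+\nu))\bigr)$; since for split $G_{\RR}$ the minimal principal series span each block of $KM_{fl}(G_{\RR})$ and the Jacquet-module map is injective on that span, this identifies the class as $[I_{\RR}(\overline\lambda,\tfrac12(\lambda+\nu))]$, completing the argument even if the clean induction compatibility proves hard to establish directly.
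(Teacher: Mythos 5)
Your proposal takes a genuinely different route from the paper, but it has several gaps, at least one of which is a concrete error rather than just a missing step.

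The paper's proof has two stages. First it establishes the special case $\lambda=\nu=0$: by Kostant, $\gr[I_{\CC}(0,0)]=[\cO_{\cN_{-\Delta}}]$ and $\gr[I_{\RR}(\overline{0},0)]=[\cO_{\cN_{\theta}}]$; by Corollary~\ref{cor:Rgr} and Proposition~\ref{prop:split}, $\gr\,\mathcal R[I_{\CC}(0,0)]=\gr[I_{\RR}(\overline{0},0)]$; and since $\gr$ is injective on tempered representations with real infinitesimal character (Adams--Vogan), the classes agree. Second, it shows that $\mathcal R$ is compatible with coherent families (Lemma~\ref{lem:Rcoherent}, via the fact that $F_{\CC}(\epsilon,\epsilon)$ is a trivial $\fk_R$-module), and then uses uniqueness of coherent families to propagate from the base case to all $(\lambda,\nu)$. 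Your argument attempts instead to reduce everything to the torus $H$ by compatibility of $\mathcal R$ with normalized induction from $B$, plus a vanishing $L_i\mathcal R(\Ind_B^G\sigma)=0$ for $i>0$; but this is exactly the statement you flag as the ``main obstacle,'' and you only sketch two possible strategies (a Matsuki/Beilinson--Bernstein base-change argument; a Shapiro-lemma argument) without carrying either out. Note also that the claimed vanishing of higher $L_i\mathcal R$ would prove something strictly stronger than the theorem (a module-level isomorphism rather than a Grothendieck-group identity), and the paper never asserts this; whether it is even true is not addressed.

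Your ``fallback'' via coarse invariants also does not close the gap, and one step in it is wrong. You assert $\gr[I_{\CC}(\lambda,\nu)]=[\cO_{\cN_{-\Delta}}]$ for all $(\lambda,\nu)$; but the associated graded of $I_{\CC}(\lambda,\nu)$ depends on $\lambda$ (it is the pushforward of a $\lambda$-twisted line bundle from the Springer resolution, and its $K$-restriction is $\Ind_{U\cap H}^{U}\CC(\lambda)$, not $\CC[\cN]$). Kostant's theorem gives $[\cO_{\cN_{-\Delta}}]$ only for $\lambda=0$, which is exactly the base case the paper treats. The $K$-type check likewise presupposes a computation of $\gr[I_{\CC}(\lambda,\nu)]\otimes C$ for general $\lambda$ that is not supplied. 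Finally, the Jacquet-module ``pinning down'' is stated as an expectation, not proved, and proving it would essentially amount to proving the theorem. So the proposal as written contains the right circle of ideas (invariant-matching, reduction to the spherical case) but does not constitute a proof; the paper's coherent-family mechanism is precisely the tool that replaces your unproved induction compatibility and makes the argument close.
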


The proof will come after several lemmas. The first step is to prove Theorem \ref{thm:principalseries} in the special case $\lambda=\nu=0$.

\begin{lemma}\label{lem:grI00}
The following are true:
\begin{itemize}
    \item[(i)] $\gr[I_{\RR}(\overline{0},0)] = [\cO_{\cN_{\theta}}]$ in $K^K(\cN_{\theta})$.
    \item[(ii)] $\gr[I_{\CC}(0,0)] = [\cO_{\cN_{-\Delta}}]$ in $K^{G_{\Delta}}(\cN_{-\Delta})$.
\end{itemize}
\end{lemma}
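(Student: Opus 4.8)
The plan is to compute the associated variety gradeds of the minimal principal series in both cases by exhibiting explicit good filtrations and identifying the associated graded modules with structure sheaves of the relevant nilpotent subvarieties. For part (i), I would start from the standard model $I_{\RR}(\overline{0},0) = \Ind_{B_\RR}^{G_\RR} \CC(\overline 0, 0)$ realized on $(\fg,K)$-modules, where (after the usual normalization) the underlying $K$-representation is $\CC[K/(K\cap B_\RR)] = \CC[K/T_{K}]$ with $T_K = K\cap H_\RR$ the (finite) compact part. The key point is a well-known description: for the spherical principal series at the most singular parameter, the associated variety is the whole of $\cN_\theta$ and, more precisely, $\gr I_{\RR}(\overline 0,0) \cong \CC[\overline{K\cdot \fn^-_\theta}] = \CC[\cN_\theta]$ as $K$-equivariant $S(\fg/\fk)$-modules, where I use that $\cN_\theta = \overline{K\cdot(\text{Richardson-type nilpotent})}$ is (the closure of) a single dense $K$-orbit through a principal nilpotent in $\fp$ — this is the classical fact that $\cN_\theta$ for split $G_\RR$ is irreducible of dimension $\dim \cN/2$, and the moment map $T^*(K/T_K)\to \cN_\theta$ is birational onto it. I would get the filtration from the Bruhat/Bernstein filtration on the induced module (filtration by order of differential operators along the base $K/T_K$), whose associated graded is $\Gamma$ of the pushforward of $\cO$ under this moment map; birationality plus normality of $\cN_\theta$ gives that this pushforward is just $\cO_{\cN_\theta}$.

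For part (ii), I would apply exactly the same analysis to the pair $(G\times G, G_\Delta)$ in place of $(G, K)$: the module $I_{\CC}(0,0) = \Ind_B^G \CC(0,0)$, viewed as a $(\fg\times\fg, G_\Delta)$-module, is induced from the diagonal Borel-type subgroup, its underlying $G_\Delta$-representation is $\CC[G_\Delta/ (\text{diagonal torus})] = \CC[G/H]$, and the Bernstein filtration has associated graded equal to $\Gamma$ of the pushforward of $\cO$ under the moment map $T^*(G/H) = G\times_B (\fg/\fh)^* \to \fg^*$, whose image is $\cN$ and which is birational onto $\cN$ (the Springer-type map for the full flag variety, generically injective after the torus quotient). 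Under the identifications of Section \ref{sec:coherent}, the target $\fg^*$ with its $G$-action is $(\fg\times\fg/\fg_\Delta)^*$ with its $G_\Delta$-action, and $\cN$ is carried to $\cN_{-\Delta}$; normality of $\cN$ then gives $\gr I_{\CC}(0,0) = [\cO_{\cN_{-\Delta}}]$. Alternatively, and perhaps more cleanly, I would invoke the known fact that the minimal principal series of a complex group at infinitesimal character $0$ (equivalently, at a regular integral parameter, by translation) has associated variety the full nilpotent cone with multiplicity one, a statement that is classical in the complex case; the only extra work is to match conventions so that ``the full nilpotent cone for the complex group $G$'' becomes $\cN_{-\Delta}$ inside $(\fg\times\fg/\fg_\Delta)^*$.

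The main obstacle I anticipate is the multiplicity-one / reducedness claim: it is easy to see that $\cV(I(\overline 0,0)) \subseteq \cN_\theta$ and, by the moment-map description, that the dense orbit appears; but pinning down that $\gr$ equals $\cO_{\cN_\theta}$ \emph{on the nose} in $K^K(\cN_\theta)$ — rather than $\cO_{\cN_\theta}$ plus lower-dimensional correction terms — requires knowing the moment map is birational onto its (normal) image and controlling the pushforward, or equivalently computing the $K$-multiplicities on both sides and comparing (which by Corollary \ref{cor:Ktypes}-type bookkeeping reduces to a Blattner-style formula). For the complex case in (ii) the same issue is the birationality of $T^*(G/H)\to\cN$ together with normality of $\cN$ (Kostant), which are standard. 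I would organize the write-up so that the genuinely new content is isolated in the split real case (i), deduce (ii) either by the parallel argument or by citing the complex analogue, and defer the sharpest form of the multiplicity statement to whatever associated-variety computation (e.g. via the $K$-character formula) is cleanest in the surrounding text.
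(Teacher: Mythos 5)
Your primary route—constructing explicit good filtrations and identifying their associated gradeds with pushforwards of $\cO$ along birational moment maps onto normal $\cN_{\theta}$ or $\cN$—rests on geometric facts that are false. Already for $G_{\RR}=SL(2,\RR)$, the variety $\cN_{\theta} = \cN\cap\fp^*$ is the union of two lines through the origin in $\fp^*\cong\CC^2$ (cut out by $a^2+b^2=0$): it is not irreducible, it is not normal, and there is no single dense $K$-orbit. In general for split $G_{\RR}$, $\cN_{\theta}$ is equidimensional of dimension $\tfrac{1}{2}\dim\cN$ but typically has several irreducible components. The claimed ``birational moment map $T^*(K/T_K)\to\cN_{\theta}$'' also fails on dimensional grounds: for $SL(n,\RR)$, $T_K$ is finite and $\dim T^*(K/T_K) = 2\dim K = n(n-1) = \dim\cN$, which is twice $\dim\cN_{\theta}$. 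The complex case in (ii) has the same problem: the displayed formula $T^*(G/H)=G\times_B(\fg/\fh)^*$ is not an equality (the two sides have different dimensions), and neither one maps birationally onto $\cN$; the Springer map $T^*(G/B)\to\cN$ is birational, but that is a different variety from both.

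The paper avoids geometry entirely. It quotes Kostant's theorem, which is a statement about $K$-characters of the spherical principal series of a quasi-split group: $[I_{\RR}(\overline 0,0)] =_K \CC[\cN_{\theta}]$ as virtual $K$-representations. Since $K$ is reductive, passing to $\gr$ does not change the $K$-character, so $\gr[I_{\RR}(\overline 0,0)] =_K [\cO_{\cN_{\theta}}]$. The step you are missing—and the one doing the real work—is the injectivity of the forgetful map $K^K(\cN_{\theta})\to R(K)$, which is \cite[Corollary 10.9]{AdamsVoganAV}. Without it, matching $K$-multiplicities (your ``Blattner-style bookkeeping'' fallback) gives only an equality of $K$-characters, not an equality of classes in $K^K(\cN_{\theta})$; with it, the $K$-character identity is promoted to the desired K-theory identity. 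Part (ii) then follows by applying Kostant's theorem and the same injectivity argument to $G$ viewed as a quasi-split real form of $G\times G$, rather than by resolving $\cN_{-\Delta}$.
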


\begin{proof}
Kostant shows in \cite{kostant1969} that $[I_{\RR}(\overline{0},0)] =_K \CC[\cN_{\theta}]$ as representations of $K$. Since $K$ is reductive, $\gr[I_{\RR}(\overline{0},0)] =_K [I_{\RR}(\overline{0},0)]$. Hence, $\gr[I_{\RR}(\overline{0},0)] =_K \CC[\cN_{\theta}]$. By \cite[Corollary 10.9]{AdamsVoganAV}, every class in $K^K(\cN_{\theta})$ is completely determined by its restriction to $K$. So in fact there is an identity $\gr[I_{\RR}(\overline{0},0)] = [\cO_{\cN_{\theta}}]$ in $K^K(\cN_{\theta})$. Kostant's result holds for the spherical principal series of an arbitrary quasi-split group. In particular, it holds for the complex group $G$. Thus, we have an identity $\gr[I_{\CC}(0,0)] = [\cO_{\cN_{-\Delta}}]$ in $K^{G_{\Delta}}(\cN_{-\Delta})$ by an identical argument.
\end{proof}

\begin{lemma}\label{lem:RSPS}
There is an identity in $KM_{fl}(G_{\RR})$
$$\mathcal{R}[I_{\CC}(0,0)] = [I_{\RR}(0,0)].$$
\end{lemma}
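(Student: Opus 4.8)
The plan is to leverage the commutative-algebra computation already established in Section \ref{sec:coherent}, transported to the representation-theoretic side via the spectral-sequence machinery of Proposition \ref{prop:propsofR}. The key point is that $\mathcal{R}[I_{\CC}(0,0)]$ is a class in $KM_{fl}(G_{\RR})$, and I want to show it equals $[I_{\RR}(\overline{0},0)]$. Both sides have infinitesimal character $0 \in \fh^*/W$: for the left side this follows from Lemma \ref{lem:propsofR}(iv), since $I_{\CC}(0,0)$ has infinitesimal character $(0,0)$; for the right side this is the normalized-induction computation recorded before the theorem. So it suffices to distinguish the two classes after applying the associated-graded homomorphism $\gr: KM_{fl}(G_{\RR}) \to K^K(\cN_\theta)$, provided $\gr$ is injective on the relevant block — or, more robustly, to compute $\gr(\mathcal{R}[I_{\CC}(0,0)])$ directly and match it with $\gr[I_{\RR}(\overline{0},0)]$, then invoke a known rigidity statement.

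Concretely, first I would apply Corollary \ref{cor:Rgr}, which gives
\begin{equation*}
\gr(\mathcal{R}[I_{\CC}(0,0)]) = R(\gr[I_{\CC}(0,0)]) \quad \text{in } K^K(\cN_\theta).
\end{equation*}
Next, by Lemma \ref{lem:grI00}(ii), $\gr[I_{\CC}(0,0)] = [\cO_{\cN_{-\Delta}}]$ in $K^{G_\Delta}(\cN_{-\Delta})$. Then, since $G_{\RR}$ is split, Proposition \ref{prop:split} gives $R[\cO_{\cN_{-\Delta}}] = [\cO_{\cN_\theta}]$. Combining, $\gr(\mathcal{R}[I_{\CC}(0,0)]) = [\cO_{\cN_\theta}] = \gr[I_{\RR}(\overline{0},0)]$ by Lemma \ref{lem:grI00}(i).

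It remains to upgrade this equality of associated-graded classes to an equality of classes in $KM_{fl}(G_{\RR})$. The point is that both $\mathcal{R}[I_{\CC}(0,0)]$ and $[I_{\RR}(\overline{0},0)]$ lie in the subgroup of $KM_{fl}(G_{\RR})$ spanned by irreducibles with infinitesimal character $0$, and on this finite-rank subgroup the map $\gr$ is injective — the standard argument being that $\gr$ sends the basis of standard modules (or irreducibles) to a set of classes in $K^K(\cN_\theta)$ whose "leading terms" (supported on the open $K$-orbit in the associated variety, or detected by the bijection with $K$-representations via \cite[Corollary 10.9]{AdamsVoganAV}) are linearly independent. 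I expect this injectivity step to be the main obstacle to write cleanly: one must either cite the relevant result on the injectivity of the associated-graded/characteristic-cycle map at a fixed (regular integral) infinitesimal character, or argue directly that a virtual module with vanishing associated-graded class and fixed infinitesimal character must be zero. Since the infinitesimal character $0$ here is regular and integral, the category is a single block equivalent to a category of Harish-Chandra bimodules / perverse sheaves, and the faithfulness of the characteristic cycle functor on Grothendieck groups is available; alternatively one can reduce to the observation that both sides restrict to $\CC[\cN_\theta]$ as $K$-representations and that $I_{\RR}(\overline{0},0)$ is the unique (up to the Grothendieck-group relations) class with this $K$-structure and this infinitesimal character. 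I would finish by remarking that this special case $\lambda=\nu=0$ is precisely the input needed to bootstrap the general case of Theorem \ref{thm:principalseries} via tensoring with finite-dimensional representations.
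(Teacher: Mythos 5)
Your first three steps coincide exactly with the paper's proof: apply Corollary \ref{cor:Rgr} to reduce to a statement about associated graded classes, substitute Lemma \ref{lem:grI00}(ii) and Proposition \ref{prop:split}, and conclude $\gr(\mathcal{R}[I_{\CC}(0,0)]) = [\cO_{\cN_\theta}] = \gr[I_{\RR}(\overline{0},0)]$. The difference — and the gap — is in the final ``upgrade'' step, which you correctly identify as the crux.

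The justification you offer for injectivity of $\gr$ does not hold up. You assert that the infinitesimal character $0$ is ``regular and integral,'' but $0$ is the \emph{most singular} infinitesimal character in $\fh^*/W$, not regular, so the appeal to single-block equivalences and faithfulness of the characteristic cycle at regular integral infinitesimal character is not available. Moreover $\gr: KM_{fl}(G_{\RR}) \to K^K(\cN_\theta)$ is in general far from injective on a fixed infinitesimal character block, so one cannot simply invoke a generic faithfulness statement. The paper's actual argument uses a different and more specific mechanism: every irreducible with infinitesimal character $0$ is tempered with real infinitesimal character, so $\mathcal{R}[I_{\CC}(0,0)]$ lies in $KM_{t,\RR}(G_{\RR})$ (this uses Lemma \ref{lem:propsofR}(iii) to control the infinitesimal character of $\mathcal{R}[I_{\CC}(0,0)]$), and then cites \cite[Corollary 7.4]{AdamsVoganAV}, which states precisely that the restriction of $\gr$ to $KM_{t,\RR}(G_{\RR})$ is injective. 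Your alternative suggestion — that the class is pinned down by its $K$-structure plus infinitesimal character — is heuristically in the same spirit as the cited result, but as written it is not a proof; you would need to locate and invoke the correct statement (temperedness plus real infinitesimal character, not regularity) to close the argument.
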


\begin{proof}
By Corollary \ref{cor:Rgr}
\begin{equation}\label{eq:gr1}\gr \mathcal{R}[I_{\CC}(0,0)] = R \gr[I_{\CC}(0,0)]\end{equation}
By Lemma \ref{lem:grI00} and Proposition \ref{prop:split}
$$R\gr[I_{\CC}(0,0)] = \gr[I_{\RR}(\overline{0},0)].$$
Thus,
\begin{equation}\label{eq:gr5}\gr \mathcal{R}[I_{\CC}(0,0)] = \gr[I_{\RR}(0,0)]\end{equation}
Write $KM_0(G_{\RR})$ for the Grothendieck group of finite-length $(\fg,K)$-modules with infinitesimal character $0$ and $KM_{t,\RR}(G_{\RR})$ for the Grothendieck group of finite-length tempered $(\fg,K)$-modules with real infinitesimal character. Note that
$$KM_0(G_{\RR}) \subset KM_{t,\RR}(G_{\RR})$$
By Lemma \ref{lem:propsofR}(iii), $\mathcal{R}[I_{\CC}(0,0)] \in KM_0(G_{\RR})$ and hence $\mathcal{R}[I_{\CC}(0,0)] \in KM_{t,\RR}(G_{\RR})$. By \cite[Corollary 7.4]{AdamsVoganAV}, the restriction of the associated graded map $\gr: KM(G_{\RR}) \to K^K(\cN_{\theta})$ to the subspace $KM_{t,\RR}(G_{\RR})$ is injective. So (\ref{eq:gr5}) implies that $\mathcal{R}[I_{\CC}(0,0)] = [I_{\RR}(0,0)]$ in $KM(G_{\RR})$, as asserted.
\end{proof}

The next step is to show that the homomorphism $\mathcal{R}: KM_{fl}(G) \to KM_{fl}(G_{\RR})$ is compatible with coherent families. For a basic discussion of coherent families, we refer the reader to \cite[Chapter 7.2]{Vogan1981}.

\begin{lemma}\label{lem:Rcoherent}
For $(\lambda,\nu) \in X^*(H) \times \fh^*$, the map
$$\Theta_{(\lambda,\nu)}: \Lambda_{\RR} \to KM_{fl}(G_{\RR}), \qquad \Theta_{(\lambda,\nu)}(\overline{\gamma},\gamma) = \mathcal{R}[I_{\CC}(\lambda+\gamma,\nu+\gamma)]$$
is a coherent family.
\end{lemma}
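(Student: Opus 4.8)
The plan is to verify the two defining properties of a coherent family (in the sense of \cite[Chapter 7.2]{Vogan1981}): first, that $\Theta_{(\lambda,\nu)}(\overline\gamma,\gamma)$ has infinitesimal character $\nu+\gamma$ for all $(\overline\gamma,\gamma)\in\Lambda_{\RR}$; and second, that for every finite-dimensional representation $F$ of $G_{\RR}$ one has the tensoring identity
\begin{equation*}
F\otimes\Theta_{(\lambda,\nu)}(\overline\gamma,\gamma)=\sum_{\mu}\Theta_{(\lambda,\nu)}(\overline{\gamma+\mu},\gamma+\mu)
\end{equation*}
in $KM_{fl}(G_{\RR})$, where $\mu$ runs over the weights of $F$ with multiplicity. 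The infinitesimal character statement is the easy half: $I_{\CC}(\lambda+\gamma,\nu+\gamma)$ has infinitesimal character $\tfrac12(\nu+\gamma+\lambda+\gamma,\ \nu+\gamma-\lambda-\gamma)=\tfrac12(\lambda+\nu+2\gamma,\ \nu-\lambda)$, so by Lemma \ref{lem:propsofR}(iv) the module $L_i\mathcal{R}$ of it — and hence $\Theta_{(\lambda,\nu)}(\overline\gamma,\gamma)$ — has infinitesimal character $\tfrac12(\lambda+\nu)+\gamma$, which is the correct affine-linear-in-$\gamma$ behavior (the constant shift by $\tfrac12(\lambda+\nu)$ is harmless, as coherent families are only required to have infinitesimal character in a single $W$-orbit translate, varying linearly).

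The substance is the tensoring identity. My approach is to reduce it to the analogous, already-known statement on the complex side. The classes $[I_{\CC}(\lambda,\nu)]$ themselves form coherent families on $G$: fixing $(\lambda,\nu)$, the assignment $(\epsilon,\delta)\mapsto[I_{\CC}(\lambda+\epsilon,\nu+\delta)]$ (with $(\epsilon,\delta)$ ranging appropriately over $\Lambda_{\CC}$) is a coherent family for $G$, and in particular for a finite-dimensional $\fg\times\fg$-representation $E\otimes E'=F_{\CC}(\epsilon,\delta)$ we get $E\otimes E'\otimes I_{\CC}(\lambda+\gamma,\nu+\gamma)=\sum[I_{\CC}(\lambda+\gamma+\cdots,\nu+\gamma+\cdots)]$, the sum over weights of $E\otimes E'$. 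Now $\mathcal{R}$ is built from $L_i\mathcal{R}$, and $L_i\mathcal{R}$ is a derived functor; the key compatibility I need is that for a finite-dimensional representation $F$ of $G_{\RR}$, extended to the finite-dimensional $(\fg\times\fg,G_\Delta)$-module $\widetilde F$ that restricts to $F$ (concretely, using the description in the excerpt, $F=F(\epsilon)|_{G_{\RR}}$ comes from $\widetilde F=F(\epsilon)_L\otimes\mathbf 1_R$ — i.e.\ tensoring by a finite-dimensional module supported entirely on the left factor), there is a projection-formula isomorphism
\begin{equation*}
L_i\mathcal{R}(\widetilde F\otimes X)\ \simeq\ F\otimes L_i\mathcal{R}(X).
\end{equation*}
This holds because $\widetilde F$ is flat (finite-dimensional hence projective over $U(\fg\times\fg)$ after forgetting, and tensoring a projective resolution of $X$ by $\widetilde F$ stays a resolution) and because $\mathcal{R}(\widetilde F\otimes X)=(\widetilde F\otimes X)/\fk_R(\widetilde F\otimes X)$; since $\fk_R$ acts trivially on the left-factor module $\widetilde F=F(\epsilon)_L$, this quotient is $F\otimes(X/\fk_R X)=F\otimes\mathcal{R}(X)$, and the derived-functor version follows. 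Taking the alternating sum over $i$ then gives $\mathcal{R}[\widetilde F\otimes X]=[F]\cdot\mathcal{R}[X]$ in the Grothendieck group.

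Assembling: apply $\mathcal{R}$ to the complex-side coherent-family identity $\widetilde F\otimes I_{\CC}(\lambda+\gamma,\nu+\gamma)=\sum_\mu I_{\CC}(\lambda+\gamma+\mu,\nu+\gamma+\mu)$ where $\widetilde F=F(\epsilon)_L$ and $F=F(\epsilon)|_{G_{\RR}}$ (so that $\mu$ runs over the weights of $F(\epsilon)$, which are exactly the weights of $F$). The left side becomes $[F]\cdot\mathcal{R}[I_{\CC}(\lambda+\gamma,\nu+\gamma)]=F\otimes\Theta_{(\lambda,\nu)}(\overline\gamma,\gamma)$ by the projection formula, and the right side becomes $\sum_\mu\Theta_{(\lambda,\nu)}(\overline{\gamma+\mu},\gamma+\mu)$ by definition of $\Theta$ (using $\overline{\gamma+\mu}=\overline{\gamma+\mu}$ in $X^*(H)/2X^*(H)$ and noting $\mu\in\Lambda$ so the weights do land in $\Lambda_{\RR}$). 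Since the finite-dimensional representations $F(\epsilon)|_{G_{\RR}}$ generate (enough of) the representation ring of $G_{\RR}$ — or, more carefully, since it suffices to check the tensoring identity for a generating set of finite-dimensional representations, and every finite-dimensional $G_{\RR}$-module is a summand/subquotient of restrictions of algebraic $G$-modules — this establishes the coherent family axiom.

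The main obstacle I anticipate is the bookkeeping around $T_{\RR}$-characters versus $X^*(H)/2X^*(H)$: I must check that the weights of $\widetilde F=F(\epsilon)_L$ relevant to the recursion, when pushed through $\mathcal{R}$, match the shift $(\overline{\gamma+\mu},\gamma+\mu)$ rather than some twisted version — i.e.\ that the left-factor infinitesimal-character shift by $\mu$ translates to the $\widehat H_{\RR}$-parameter shift by $(\overline\mu,\mu)$ correctly. This is exactly the content of the infinitesimal-character computation already recorded before Theorem \ref{thm:principalseries} (the character $\CC(\lambda,\nu)$ of $H$ has infinitesimal character $\tfrac12(\nu+\lambda,\nu-\lambda)$), so it should go through, but it is where an index error would hide. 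A secondary point requiring care is confirming the projection formula at the level of derived functors rather than just $\mathcal{R}$ itself; this is where one uses flatness of the finite-dimensional module and the fact that tensoring with it is exact, so it commutes with the homology computation in \eqref{eq:Koszul1}.
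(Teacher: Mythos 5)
Your proof takes essentially the same route as the paper: both reduce to the known coherent-family identity for $(\gamma,\gamma')\mapsto[I_{\CC}(\lambda+\gamma,\nu+\gamma')]$ on the complex side, specialize to the diagonal $\gamma'=\gamma$, $\epsilon'=\epsilon$, and exploit the fact that $F_{\CC}(\epsilon,\epsilon)\simeq F(\epsilon)\boxtimes\mathbf{1}$ is a trivial $\fk_R$-module, yielding the projection formula $\mathcal{R}[X\otimes F_{\CC}(\epsilon,\epsilon)]=\mathcal{R}[X]\otimes[F_{\RR}(\overline{\epsilon},\epsilon)]$. You are somewhat more explicit than the paper about justifying the projection formula at the derived level (via flatness of the finite-dimensional module, or equivalently via commuting the tensor past the Koszul complex) and about checking the infinitesimal-character axiom, but the core argument is identical.
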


\begin{proof}
Define
$$\Theta_{\CC}: \Lambda_{\CC} \to KM_{fl}(G), \qquad \Theta_{\CC}(\gamma,\gamma') = [I_{\CC}(\lambda+\gamma,\nu+\gamma')]$$
Note that $\Theta_{\CC}$ is a coherent family in $KM_{fl}(G)$, see \cite[Example 7.2.11]{Vogan1981}. Hence
$$[I_{\CC}(\lambda + \gamma,\nu+\gamma')] \otimes [F_{\CC}(\epsilon,\epsilon')] = \sum_{(\mu,\mu') \in \Delta(F_{\CC}(\epsilon,\epsilon')} [I_{\CC}(\lambda+\gamma+\mu,\nu+\gamma'+\mu')] $$
In particular (setting $\gamma'=\gamma$ and $\epsilon'=\epsilon$)
\begin{equation}\label{eq:cohfamilies}[I_{\CC}(\lambda + \gamma,\nu+\gamma)] \otimes [F_{\CC}(\epsilon,\epsilon)] = \sum_{\mu \in \Delta(\epsilon)} [I_{\CC}(\lambda+\gamma+\mu,\nu+\gamma+\mu)]\end{equation}
Since $F_{\CC}(\epsilon,\epsilon)$ is a trivial $\fk_R$-module
\begin{equation}\label{eq:RF}\mathcal{R}[X \otimes F_{\CC}(\epsilon,\epsilon)] = \mathcal{R}[X] \otimes [F_{\RR}(\overline{\epsilon},\epsilon)]\end{equation}
Applying $\mathcal{R}$ to both sides of (\ref{eq:cohfamilies}) and using (\ref{eq:RF}) to simplify, we get
$$\mathcal{R}[I_{\CC}(\lambda+\gamma,\nu+\gamma)] \otimes [F_{\RR}(\overline{\epsilon},\epsilon)] = \sum_{\mu \in F(\epsilon)} \mathcal{R}[I_{\CC}(\lambda+\gamma+\mu,\nu+\gamma+\mu)]$$
This proves the assertion.
\end{proof}

\begin{proof}[Proof of Theorem \ref{thm:principalseries}]
By Lemma \ref{lem:Rcoherent} (applied in the case when $\lambda=\nu=0$), the map
$$\Theta_{(0,0)}:\Lambda_{\RR} \to KM_{fl}(G_{\RR}), \qquad \Theta_{(0,0)}(\overline{\gamma},\gamma) = \mathcal{R}[I_{\CC}(\gamma,\gamma)]$$
is a coherent family. And by Lemma \ref{lem:RSPS}, $\Theta_{(0,0)}(\overline{0},0) = [I_{\RR}(0,0)]$. The map
$$\Lambda_{\RR} \to KM_{fl}(G_{\RR}), \qquad (\overline{\gamma},\gamma) \mapsto [I_{\RR}(\gamma,\gamma)]$$
is also a coherent family (cf. \cite[Example 7.2.11]{Vogan1981}) with the same value at $(\overline{0},0)$. So by the uniqueness of coherent families (\cite[Corollary 7.2.27]{Vogan1981}), we have
$$\mathcal{R}[I_{\CC}(\gamma,\gamma)] = [I_{\RR}(\overline{\gamma},\gamma)], \qquad \forall \gamma \in \Lambda$$
Taking $\gr$ and applying Corollary \ref{cor:Rgr}, we deduce
\begin{equation}\label{eq:1}R \gr[I_{\CC}(\gamma,\gamma)] = \gr[I_{\RR}(\overline{\gamma},\gamma)], \qquad \forall \gamma \in \Lambda.\end{equation}
Recall that the associated graded of $I_{\CC}(\lambda,\nu)$ (or $I_{\RR}(\overline{\lambda},\nu)$) is independent of the continuous parameter $\nu$, see \cite[p. 54, (3)]{AdamsVoganAV}. So (\ref{eq:1}) implies
$$R \gr[I_{\CC}(\lambda+\gamma,\nu+\gamma)] = \gr [I_{\RR}(\overline{\lambda+\gamma},\frac{1}{2}(\lambda+\nu)+\gamma)]$$
Applying Corollary \ref{cor:Rgr} we obtain
\begin{equation}\label{eq:2}\gr \mathcal{R}[I_{\CC}(\lambda+\gamma,\nu+\gamma)] = \gr [I_{\RR}(\overline{\lambda+\gamma},\frac{1}{2}(\lambda+\nu)+\gamma)]\end{equation}
By Lemma \ref{lem:Rcoherent}, $\Theta_{(\lambda,\nu)}$ is a coherent family. The map $(\gamma,\overline{\gamma}) \mapsto [I_{\RR}(\overline{\lambda+\gamma},\nu+\gamma)]$ is also a coherent family (see \cite[Example 7.2.11]{Vogan1981}). Thus we have constructed two coherent families with the same associated gradeds (\ref{eq:2}) and infinitesimal characters (Lemma \ref{lem:propsofR}(iii)). So by \cite[Lemma 8.2]{VoganIC2}, they are the same. In particular,
$$\mathcal{R}[I_{\CC}(\lambda,\nu)] = [I_{\RR}(\overline{\lambda},\frac{1}{2}(\lambda+\nu))],$$
as asserted.
\end{proof}

\section{Unipotent representations}\label{sec:unipotent}

A \emph{complex nilpotent cover} for $G_{\RR}$ is a finite connected $G$-equivariant cover $\widetilde{\OO}$ of a nilpotent co-adjoint $G$-orbit $\OO \subset \fg^*$. Write $\mathsf{Cov}_n(G_{\RR})$ for the set of isomorphism classes of complex nilpotent covers. In \cite{LMBM}, we associate a primitive ideal $I(\widetilde{\OO}) \subset U(\fg)$ to each nilpotent cover $\widetilde{\OO} \in \mathsf{Cov}_n(G_{\RR})$. We call $I(\widetilde{\OO})$ the \emph{unipotent ideal} attached to $\widetilde{\OO}$. Below, we record some basic properties of unipotent ideals. Recall that the \emph{associated variety} of a two-sided ideal $I \subset U(\fg)$ is the $G$-invariant subset $V(I) \subset \fg^*$ defined by the ideal $\gr(I) \subset S(\fg) \simeq \CC[\fg^*]$.

\begin{prop}[Proposition 6.1.2, \cite{LMBM} and Theorem 5.0.1,\cite{MBMat}]\label{prop:propsofunipotent}
For each $\widetilde{\OO} \in \mathsf{Cov}_n(G_{\RR})$, the ideal $I(\widetilde{\OO}) \subset U(\fg)$ has the following properties:
\begin{itemize}
    \item[(i)] $I(\widetilde{\OO})$ is completely prime.
    \item[(ii)] $I(\widetilde{\OO})$ is maximal (among two-sided ideals in $U(\fg)$).
    \item[(iii)] $V(I(\widetilde{\OO})) = \overline{\OO}$.
\end{itemize}
\end{prop}

\begin{definition}\label{def:unipotent}
Let $\widetilde{\OO} \in \mathsf{Cov}_n(G_{\RR})$. Write $M_{\widetilde{\OO}}(G_{\RR})$ for the full subcategory of $M_{fl}(G_{\RR})$ consisting of $(\fg,K)$-modules $X$ annihilated by the ideal $I(\widetilde{\OO})$. A \emph{unipotent representation} of $G_{\RR}$ attached to $\widetilde{\OO}$ is an irreducible object in $M_{\widetilde{\OO}}(G_{\RR})$.
\end{definition}

\begin{rmk}
If $\OO^{\vee} \subset \fg^{\vee}$ is a nilpotent adjoint orbit for the Langlands dual group $G^{\vee}$, there is a notion of a \emph{special unipotent representation} attached to $\OO^{\vee}$, defined by Barbasch and Vogan in \cite{BarbaschVogan1985}. In \cite{LMBM}, we attach to $\OO^{\vee}$ a complex nilpotent cover $d(\OO^{\vee}) \in \mathsf{Cov}_n(G_{\RR})$ such that the special unipotent representations attached to $\OO^{\vee}$ coincide with the unipotent representations attached to $d(\OO^{\vee})$. Thus, Definition \ref{def:unipotent} generalizes the notion of `special unipotent'.
\end{rmk}

If $X \in M_{fl}(G_{\RR})$, the associated variety $\mathcal{V}(X)$ of $X$ is the $G$-saturation of the associated variety $V(\mathrm{Ann}(X))$ of its annihilator. 

\begin{lemma}\label{lem:unipotent}
Let $\widetilde{\OO} \in \mathsf{Cov}_n(G_{\RR})$ and let $X \in M_{fl}(G_{\RR})$. Then $X \in M_{\widetilde{\OO}}(G_{\RR})$ if and only if:
\begin{itemize}
    \item[(i)] $X$ has infinitesimal character $\gamma(\widetilde{\OO})$, and
    \item[(ii)] $\mathcal{V}(X) \subseteq \overline{\OO}$.
\end{itemize}
\end{lemma}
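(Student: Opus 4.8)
The plan is to prove both implications of Lemma~\ref{lem:unipotent} by reducing everything to the three properties of the unipotent ideal $I(\widetilde{\OO})$ recorded in Proposition~\ref{prop:propsofunipotent}, together with the basic dictionary between $(\fg,K)$-modules, their annihilators, and associated varieties.

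First I would handle the ``only if'' direction, which is essentially immediate. Suppose $X \in M_{\widetilde{\OO}}(G_{\RR})$, so $\mathrm{Ann}(X) \supseteq I(\widetilde{\OO})$. Since $I(\widetilde{\OO})$ is a maximal two-sided ideal (Proposition~\ref{prop:propsofunipotent}(ii)), either $\mathrm{Ann}(X) = I(\widetilde{\OO})$ or $\mathrm{Ann}(X) = U(\fg)$; the latter is impossible for a nonzero module, so $\mathrm{Ann}(X) = I(\widetilde{\OO})$ exactly (when $X \ne 0$; for $X = 0$ the statement is vacuous). The infinitesimal character of $X$ is then read off from $I(\widetilde{\OO})$, which by construction in \cite{LMBM} has infinitesimal character $\gamma(\widetilde{\OO})$ — this gives (i). For (ii), the associated variety $\mathcal V(X)$ is the $G$-saturation of $V(\mathrm{Ann}(X)) = V(I(\widetilde{\OO})) = \overline{\OO}$ by Proposition~\ref{prop:propsofunipotent}(iii); since $\overline{\OO}$ is already $G$-invariant, $\mathcal V(X) = \overline{\OO}$, in particular $\mathcal V(X) \subseteq \overline{\OO}$.

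For the ``if'' direction, assume $X$ has infinitesimal character $\gamma(\widetilde{\OO})$ and $\mathcal V(X) \subseteq \overline{\OO}$; I want to conclude $I(\widetilde{\OO}) \subseteq \mathrm{Ann}(X)$. It suffices to treat $X$ irreducible (the general finite-length case follows by dévissage, since annihilating each composition factor forces $I(\widetilde{\OO})$ to annihilate $X$ — more precisely one uses that a product of the annihilators of the composition factors annihilates $X$, and each factor is handled separately; alternatively reduce to the primitive quotient). For irreducible $X$, $I := \mathrm{Ann}(X)$ is a primitive ideal. The infinitesimal character hypothesis says $I$ has infinitesimal character $\gamma(\widetilde{\OO})$, the same as $I(\widetilde{\OO})$. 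The associated variety hypothesis, combined with the fact that $V(I)$ is the unique closed $G$-orbit closure attached to $I$ and that $\mathcal V(X)$ is its $G$-saturation (here $V(I)$ is already $G$-invariant), gives $\overline{V(I)} \subseteq \overline{\OO}$. Now I invoke the key rigidity property of $I(\widetilde{\OO})$: it is the \emph{maximal} ideal with infinitesimal character $\gamma(\widetilde{\OO})$ (Proposition~\ref{prop:propsofunipotent}(ii)), so any primitive ideal $I$ with that infinitesimal character satisfies $I \subseteq I(\widetilde{\OO})$. Combined with $V(I) \subseteq \overline{\OO} = V(I(\widetilde{\OO}))$ and, crucially, the complete primality of $I(\widetilde{\OO})$ (Proposition~\ref{prop:propsofunipotent}(i)) together with the classification of primitive ideals with fixed infinitesimal character by their associated variety / the fact that $I(\widetilde\OO)$ is characterized among such ideals — I get $I = I(\widetilde{\OO})$, hence $X \in M_{\widetilde{\OO}}(G_{\RR})$.

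The main obstacle is the last step: knowing $I \subseteq I(\widetilde{\OO})$ and $V(I) \subseteq \overline{\OO}$, one must upgrade to $I = I(\widetilde{\OO})$. The inclusion $I \subseteq I(\widetilde{\OO})$ together with $V(I) \subseteq V(I(\widetilde{\OO}))$ is not formally enough on its own; one needs either (a) that $I(\widetilde{\OO})$ is the unique primitive ideal with infinitesimal character $\gamma(\widetilde{\OO})$ whose associated variety is contained in $\overline{\OO}$, or (b) a dimension/multiplicity count showing $\mathrm{gr}(I) = \mathrm{gr}(I(\widetilde\OO))$. I expect the cleanest route is to cite the relevant uniqueness statement from \cite{LMBM} (the unipotent ideal is the unique maximal ideal at its infinitesimal character, and any primitive ideal at that infinitesimal character with small enough associated variety must equal it), so that the argument becomes: infinitesimal character $\Rightarrow I \subseteq I(\widetilde\OO)$; maximality of $I(\widetilde\OO)$ $\Rightarrow$ $I = I(\widetilde\OO)$ or $I = U(\fg)$; nonvanishing of $X$ $\Rightarrow I = I(\widetilde\OO)$. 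In that streamlined form the associated-variety hypothesis in (ii) is actually automatic, but including it makes the lemma's hypotheses match how it will be applied. I would present the proof in this streamlined form, flagging that (ii) follows from (i) in the irreducible case and spelling out the dévissage for general finite length.
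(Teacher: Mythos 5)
Your ``only if'' direction and the dévissage to irreducible $X$ match the paper. The step getting $I := \mathrm{Ann}(X) \subseteq I(\widetilde{\OO})$ from the infinitesimal character condition is also the right idea, though you should name the input: it is Duflo's theorem that there is a \emph{unique} maximal two-sided ideal in $U(\fg)$ at each infinitesimal character; maximality of $I(\widetilde{\OO})$ alone (Proposition~\ref{prop:propsofunipotent}(ii)) does not give the containment without that uniqueness.

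The genuine gap is in your ``streamlined form''. You write: ``maximality of $I(\widetilde\OO)$ $\Rightarrow$ $I = I(\widetilde\OO)$ or $I = U(\fg)$,'' concluding that hypothesis~(ii) is automatic given~(i). This inverts the logic of maximality: maximality of $I(\widetilde\OO)$ constrains ideals \emph{containing} $I(\widetilde\OO)$, not ideals contained in it. From $I \subseteq I(\widetilde\OO)$ you can conclude nothing further without more input --- $I$ could perfectly well be a strictly smaller primitive ideal at the same infinitesimal character. Consequently (ii) is not a redundant hypothesis, and your streamlined version does not prove the lemma. The missing ingredient, which the paper supplies, is the Borho--Kraft fact that a proper inclusion of primitive ideals forces a \emph{strict reverse} inclusion of associated varieties: if $I \subsetneq I(\widetilde\OO)$ then $V(I) \supsetneq V(I(\widetilde\OO)) = \overline{\OO}$. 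Hypothesis~(ii) gives $V(I) \subseteq \overline{\OO}$, so this is a contradiction, forcing $I = I(\widetilde\OO)$. Your option~(a), citing a ``unique primitive ideal with this infinitesimal character and associated variety in $\overline{\OO}$,'' is essentially a restatement of what needs proving and begs the question; the Borho--Kraft argument is the content that makes it true.
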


\begin{proof}
The `only if' direction is trivial. We proceed to the `if' direction. Suppose $X$ satisfies conditions (i) and (ii). Let $Y$ be an irreducible composition factor of $X$ and let $I\subset U(\fg)$ be the annihilator of $Y$. It suffices to show that $I=I(\widetilde{\OO})$.

Property (i) implies that $I$ is a primitive ideal with infinitesimal character $\gamma(\widetilde{\OO})$. Property (ii) implies that $V(I) = G\mathcal{V}(Y) \subset G\mathcal{V}(X) \subseteq \overline{\OO}$.

By an old result of Duflo (\cite{Duflo1977}), there is a unique maximal ideal of each infinitesimal character, and $I(\widetilde{\OO})$ is maximal by Proposition \ref{prop:propsofunipotent}(ii). So there must be an inclusion $I \subseteq I(\widetilde{\OO})$. If this inclusion is proper, then $V(I) \supsetneq V(I(\widetilde{\OO})) = \overline{\OO}$ (see \cite[Corollary 3.6]{BorhoKraft}). This is a contradiction. So indeed, $I = I(\widetilde{\OO})$, as asserted.
\end{proof}

Note that $\mathsf{Cov}_n(G) \simeq \mathsf{Cov}_n(G_{\RR}) \times \mathsf{Cov}_n(G_{\RR})$. If $\widetilde{\OO}_L \times \widetilde{\OO}_R \in \mathsf{Cov}_n(G)$, then 
$$\OO_L \cap \OO_R \cap (\fg \times \fg/\fg_{\Delta})^* \neq \emptyset \implies \OO_L = \OO_R$$
So $M_{\widetilde{\OO}_L \times \widetilde{\OO}_R}(G)=0$ unless $\OO_L=\OO_R$. 

\begin{cor}
Let $\widetilde{\OO}_L \times \widetilde{\OO}_R \in \mathsf{Cov}_n(G)$. Then for each $0 \leq i \leq d$, the functor $L_i\mathcal{R}$ takes $M_{\widetilde{\OO}_L \times \widetilde{\OO}_R}(G)$ to $M_{\widetilde{\OO}_L}(G_{\RR})$.  
\end{cor}

\begin{proof}
Suppose $X \in M_{\widetilde{\OO}_L \times \widetilde{\OO}_R}(G)$. Then by Lemma \ref{lem:unipotent}, $(\gamma_L,\gamma_R) = (\gamma(\widetilde{\OO}_L), \gamma(\widetilde{\OO}_R))$ and $\mathcal{V}(X) \subseteq \overline{\OO}_L \times \overline{\OO}_L$. By Proposition \ref{prop:propsofR}(v), there is an inclusion
$$\mathcal{V}(L_i\mathcal{R}(X)) \subseteq \mathrm{Supp}(L_iR(\gr(X)))$$
and by Lemma \ref{lem:propsofRcommutative}(iv), there is an inclusion
$$\mathrm{Supp}(L_iR(\gr(X))) \subseteq \mathcal{V}(X) \cap \fg_L^* \subseteq \overline{\OO}_L.$$
Hence
$$\mathcal{V}(L_i\mathcal{R}(X)) \subseteq \overline{\OO}_L.$$
On the other hand, $L_i\mathcal{R}(X)$ has infinitesimal character $\gamma(\widetilde{\OO}_L)$ by Lemma \ref{lem:propsofR}(iii). Hence, $L_i\mathcal{R}(X) \in M_{\widetilde{\OO}_L}(G_{\RR})$ by a second application of Lemma \ref{lem:unipotent}. 
\end{proof}

\begin{sloppypar} \printbibliography[title={References}] \end{sloppypar}

\end{document}